\pgfplotsset{compat=1.7}
\newtheorem{theorem}{Theorem}[section]
\newtheorem{lemma}[theorem]{Lemma}
\newtheorem{corollary}[theorem]{Corollary}
\newtheorem{definition}{Definition}
\newtheorem{conjecture}{Conjecture}
\newtheorem{question}{Question}
\newtheorem{proposition}[theorem]{Proposition}
\newtheorem{claim}{Claim}
\newtheorem{remark}[theorem]{Remark}
\def\S{\mathbb{S}}
\def\A{\mathcal{A}}
\def\B{\mathcal{B}}
\def\C{\mathcal{C}}
\def\K{\mathcal{K}}
\def\H{\mathcal{H}}
\def\I{\mathcal{I}}
\def\J{\mathcal{J}}
\def\bd{\mathrm{BD}}
\def\d{\mathrm{deg}}
\numberwithin{equation}{section}
\begin{document}

\begin{center}
{ 
       {\Large \textbf { \sc  Higher matching complexes of complete graphs and complete bipartite graphs    % Put the title of the paper here
          }
       }
\\

\medskip

 {\sc Anurag Singh\footnote{The author was partially funded by a grant from the Infosys Foundation.}}\\
{\footnotesize Indian Institute of Technology (IIT) Bhilai, India}\\

{\footnotesize e-mail: {\it anurags@iitbhilai.ac.in}}}\\
\end{center}

\thispagestyle{empty}

%\hrulefill

\begin{abstract}  
{For $r\geq 1$, the $r$-matching complex of a graph $G$, denoted $M_r(G)$, is a simplicial complex whose faces are the subsets $H \subseteq E(G)$ of the edge set of $G$ such that the degree of any vertex in the induced subgraph $G[H]$ is at most $r$. In this article, we give a closed form formula for the homotopy type of the $(n-2)$-matching complex of complete graph on $n$ vertices. We also prove that the $(n-1)$-matching complex of complete bipartite graph $K_{n,n}$ is homotopy equivalent to a sphere of dimension $(n-1)^2-1$.
}
 \end{abstract}
 \hrulefill

{\small \textbf{Keywords:} matching complex, higher matching complex, bounded degree complex,   discrete Morse theory.}

\indent {\small {\bf 2010 Mathematics Subject Classification:} {05E45, 55P10, 55U10}}

%%%%%%%%%%%%%%%%%%%%%%%%=========================

\section{{Introduction}}
A {\itshape matching} in a graph $G$ is a subset $H \subseteq E(G)$ of the edge set of $G$ such that the degree of any vertex in the induced subgraph $G[H]$ is at most $1$. The {\itshape matching complex} of $G$, denoted $M_1(G)$, is a simplicial complex whose vertices are the edges of $G$ and faces are all the matchings in $G$. Matching complexes have emerged from developments in many fields, and have shown to be combinatorial objects with rich and subtle topological structure. In particular, these complexes play a fundamental role in the study of Quillen order complexes associated to posets of (elementary abelian) primary subgroups of a given finite group. Subsequently, and starting from the work of Bouc \cite{Bou92}, the study of matching complexes has become a very active and fruitful area of research on its own. Likewise, Chessboard complexes, {\itshape i.e.}, matching complexes of complete bipartite graphs, arise in Garst's work \cite{Gar79} as coset complexes associated to symmetric groups, and also play a role in Vre\'cica-Zivaljevi\'c's analysis of halving hyperplanes (see \cite{ZV92}).
For more on these complexes, interested reader is referred to Wach's survey paper \cite[Section $2$]{Wach03} .

Reiner and Roberts \cite{RR00} generalized the concept of matching complexes by defining bounded degree complexes. Let $G$ be a graph and $V(G)=\{1,2,\dots,n\}=[n]$ be the vertex set of $G$. For $\vec{\lambda}=(\lambda_1,\dots,\lambda_n)$ a sequence of non-negative integers, the {\itshape bounded degree complex}, denoted $\bd^{\vec{\lambda}}(G)$, is a simplicial complex whose simplicies are all subgraphs $H \subseteq E(G)$ such that the degree of vertex $i$ in $H$ is at most $\lambda_i$ for each $i \in [n]$. Jonsson \cite{Jon08} further studied these complexes and derived a lower bound for the connectivity of $\bd^{\vec{\lambda}}(G)$. When $\lambda_i =r$ for all $i \in [n]$, the bounded degree complex $\text{BD}^{(r,\dots,r)}(G)$ is called the {\itshape $r$-matching complex} of $G$ and is denoted by $M_r(G)$ (see \Cref{fig:example of BD} for example). 

There are only a few classes of graphs for which the exact homotopy type of higher matching complexes are known. For example, for trees \cite{Anu20,singh21}, wheel graphs \cite{Vega19}, and cycle graphs \cite{Koz07, Anu20}, $M_r(G)$ has the homotopy type of wedge of spheres for each $r\geq 1$. However, for complete graphs, the homotopy type of these complexes are a bit more mysterious. For example, Shareshian and Wachs \cite{SW07} showed that the integral homology groups of $1$-matching complexes of complete graphs are not torsion-free in many cases. For $2\leq r <n$, Jonsson \cite[Theorem 12.8]{Jon08} computed a lower bound for the connectivity degree (see \Cref{definition:connectivity}) of $M_r(K_n)$ and observed that the integral homology groups of higher matching complexes of complete graphs can also have torsion (see \cite[Table 12.2]{Jon08}). Due to this, in general, it is extremely difficult to determine the explicit homotopy type of higher matching complexes of complete graphs.

In this paper, we determine the homotopy type of $M_{n-2}(K_n)$. In particular, we prove the following.

\begin{theorem}\label{theorem:completegraphmain}
For $n \geq 3$, the complex $M_{n-2}(K_n)$ is homotopy equivalent to a wedge of spheres. More precisely, $$M_{n-2}(K_n)\simeq \bigvee\limits_{n-1}\S^t,$$ 
where $t={\binom{n-1}{2}-1}$.
\end{theorem}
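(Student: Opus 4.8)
The plan is to exploit the fact that $M_{n-2}(K_n)$ has a very restricted family of minimal non-faces, compute its reduced homology by Alexander duality, and then promote the homological answer to a homotopy statement.

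\textbf{Reformulation and minimal non-faces.} First I would observe that a subgraph $H\subseteq E(K_n)$ fails to be a face precisely when some vertex $v$ has $\deg_H(v)=n-1$, i.e.\ when $H$ contains all $n-1$ edges incident to $v$. Writing $S_v$ for the star of edges at $v$, this shows that the minimal non-faces of $M_{n-2}(K_n)$ are exactly the $n$ stars $S_1,\dots,S_n$, each of size $n-1$; equivalently, $H$ is a face iff its complement $E(K_n)\setminus H$ is an edge cover of $K_n$. The stars meet pairwise in a single edge and have empty triple intersections, which is what makes the dual computation clean.

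\textbf{The Alexander dual via the nerve.} Let $\Delta=M_{n-2}(K_n)$, on the vertex set $E(K_n)$ of size $\binom n2$. Its combinatorial Alexander dual $\Delta^\vee$ has faces $F$ with $E(K_n)\setminus F\notin\Delta$, which means $F\cap S_v=\emptyset$ for some $v$; that is, $F$ consists of edges avoiding a common vertex. Hence $\Delta^\vee=\bigcup_{v=1}^n \Sigma_v$, where $\Sigma_v$ is the full simplex on the $\binom{n-1}2$ edges of the $K_{n-1}$ obtained by deleting $v$. I would apply the nerve lemma to this cover by simplices: for $T\subseteq[n]$ the intersection $\bigcap_{v\in T}\Sigma_v$ is the full simplex on the edges avoiding every vertex of $T$, which is nonempty and contractible exactly when $|T|\le n-2$. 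Therefore the nerve is the $(n-3)$-skeleton of the simplex on $[n]$, and since the $k$-skeleton of an $m$-simplex is a wedge of $\binom{m}{k+1}$ copies of $\S^k$, taking $m=n-1$ and $k=n-3$ gives $\Delta^\vee\simeq\bigvee_{n-1}\S^{n-3}$.

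\textbf{Back to $\Delta$ and the homotopy type.} Combinatorial Alexander duality $\tilde H_i(\Delta)\cong\tilde H^{\,\binom n2-i-3}(\Delta^\vee)$ then forces $\tilde H_*(\Delta)$ to be free of rank $n-1$, concentrated in the single degree $i=\binom n2-n=\binom{n-1}2-1=t$, and zero otherwise --- exactly the homology of $\bigvee_{n-1}\S^t$. To upgrade this to a homotopy equivalence I would show that $\Delta$ is $(t-1)$-connected; since its reduced homology is free and supported in the single degree $t$, the Hurewicz and Whitehead theorems then yield $\Delta\simeq\bigvee_{n-1}\S^t$ for $t\ge 2$ (that is, $n\ge 4$), while the case $n=3$ is immediate because $M_1(K_3)$ is three points, i.e.\ $\bigvee_2\S^0$.

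\textbf{The main obstacle.} The genuinely hard step is the connectivity claim: $(t-1)$-connectivity is far stronger than the elementary observation that $\Delta$ contains the full $(n-3)$-skeleton of its ambient simplex (which only gives $(n-4)$-connectivity), so I would extract it from Jonsson's connectivity lower bound for $M_r(K_n)$ (\cite[Theorem 12.8]{Jon08}) specialised to $r=n-2$, and verify that it indeed reaches $t-1$. An attractive alternative that bypasses the connectivity analysis entirely --- and matches the discrete-Morse-theoretic flavour of the paper --- is to build an explicit acyclic matching on $\Delta$ whose only critical cells are the empty face and $n-1$ faces of dimension $t$; Forman's theorem would then deliver the wedge of $n-1$ spheres directly. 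Constructing such a matching (for instance by an element/cluster matching governed by a fixed edge ordering) and proving both its acyclicity and that exactly $n-1$ critical $t$-cells survive is where I expect the real work to lie.
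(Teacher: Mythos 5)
Your argument is correct, but it takes a genuinely different route from the paper. The paper never mentions Alexander duality or the nerve lemma: it constructs, in $n-1$ rounds of element matchings (one round per vertex $k$, using the edges $\{k,k+1\},\dots,\{k,n\}$), an explicit acyclic matching on the face poset of $M_{n-2}(K_n)$ whose critical cells are exactly the $n-1$ graphs $K_n - (\text{star of } k)$ minus a path, each with $\binom{n-1}{2}$ edges, and then invokes the basic consequence of discrete Morse theory that a matching with critical cells in a single dimension yields a wedge of spheres. Your route --- identifying the minimal non-faces as the $n$ vertex stars, computing $\Delta^\vee=\bigcup_v\Sigma_v$ via the nerve lemma as $\bigvee_{n-1}\S^{n-3}$, transferring by combinatorial Alexander duality to get free homology of rank $n-1$ in degree $t$, and upgrading via Hurewicz--Whitehead --- is conceptually cleaner for the homology computation and explains structurally \emph{why} the answer is $n-1$ spheres (one for each facet of the nerve). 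Its cost is that the homotopy upgrade leans on the $(t-1)$-connectivity from Jonsson's Theorem 12.8, a substantial external input whose proof is itself a nontrivial discrete-Morse/decision-tree argument, whereas the paper's proof is self-contained and in fact \emph{yields} the sharpness of Jonsson's bound as a corollary rather than consuming the bound as a hypothesis. (There is no circularity in your use of Jonsson: his result is a lower bound on connectivity proved independently, and sharpness only requires $\tilde H_t\neq 0$, which you obtain from duality.) Your verification that $\nu_n^{n-2}=\binom{n-1}{2}-1$ does check out --- with $d=n-2$ one has $r=n=d+2$, $k=0$, and the formula collapses to $n(n-3)/2=t$ --- and your separate treatment of $n=3$ is needed and correct. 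The ``attractive alternative'' you mention at the end is essentially the paper's actual proof.
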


Another class of graphs whose matching complexes have been studied extensively is complete bipartite graphs $K_{m,n}$. In \cite{Zieg94}, Ziegler showed that $M_1(K_{m,n})$ is shellable whenever $n\geq 2m - 1$. Shareshian and Wachs \cite{SW07} studied the integral homology groups of $M_1(K_{m,n})$ and showed that they are not torsion-free in many cases. In \cite[Theorem 3.3]{RR00}, Reiner and Roberts computed the rational homology groups of higher matching complexes of complete bipartite graphs, also known as {\itshape chessboard complexes with multiplicities}. Combining \cite[Proposition $2.4$]{RR00}, \cite[Proposition $3.2$]{RR00} and \cite[Theorem 3.9]{dong02} one gets that the complex $M_{n-1}(K_{n,n})$ is \emph{stably homotopy equivalent} (see \cite[Section $7.1$]{tom08}) to a sphere of dimension $(n-1)^2-1$. Here, we strengthen this result by showing that these complexes are homotopy equivalent. More precisely, we have the following result. 

\begin{theorem}\label{theorem:completebipgraphmain}
For $n\geq 2$, the $(n-1)$-matching complex of complete bipartite graph $K_{n,n}$ is homotopy equivalent to $((n-1)^2-1)$-dimensional sphere.
\end{theorem}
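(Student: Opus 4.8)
The plan is to first pin down the homology of $M_{n-1}(K_{n,n})$ by passing to its combinatorial Alexander dual, and then to upgrade the resulting homological statement to a genuine homotopy equivalence. Identify the $n^2$ edges of $K_{n,n}$ with the cells of an $n\times n$ grid, so that a face of $\Delta := M_{n-1}(K_{n,n})$ is a $0/1$ matrix in which every row sum and every column sum is at most $n-1$; equivalently, the complement of a face is an edge cover of $K_{n,n}$. Dualizing, the Alexander dual $\Delta^\ast$ (on the same $n^2$ vertices) is exactly the complex of edge sets that fail to cover some vertex, that is, $\Delta^\ast = \bigcup_v \sigma_v$, where $v$ ranges over the $2n$ vertices of $K_{n,n}$ and $\sigma_v$ is the full simplex on the edges not incident to $v$.

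Next I would compute the homotopy type of $\Delta^\ast$ with the nerve lemma. Each $\sigma_v$ is a simplex, and any intersection of the $\sigma_v$ is again a full simplex, on the edges avoided by all chosen vertices, hence contractible whenever it is nonempty; so $\Delta^\ast$ is homotopy equivalent to the nerve of the cover $\{\sigma_v\}$. A subfamily indexed by a set of rows $I$ and columns $J$ has nonempty intersection precisely when some grid cell avoids all of them, that is, when $I\neq[n]$ and $J\neq[n]$. Thus the nerve is the join of two copies of the boundary $\partial\Delta^{n-1}$ of an $(n-1)$-simplex, giving $\Delta^\ast\simeq \S^{n-2}\ast\S^{n-2}=\S^{2n-3}$. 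Combinatorial Alexander duality on $N=n^2$ vertices, $\tilde H_i(\Delta)\cong \tilde H^{\,N-i-3}(\Delta^\ast)$, then forces $\tilde H_\ast(\Delta)$ to agree with that of $\S^{(n-1)^2-1}$, since $N-i-3=2n-3$ exactly when $i=n^2-2n=(n-1)^2-1$; the arithmetic is consistent with the direct check that $M_1(K_{2,2})$, two disjoint edges, has the homology of $\S^0$.

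It remains to promote this homological sphere to a homotopy sphere. For $n\geq 3$ the dimension $(n-1)^2-1$ is at least $2$, so by Hurewicz and Whitehead it suffices to show that $\Delta$ is simply connected: any set of at most $n-1$ edges is automatically a face, so $\Delta$ contains the full $(n-2)$-skeleton of the simplex on its $n^2$ vertices, which already yields simple connectivity for $n\geq 4$, while for $n=3$ the only missing $2$-faces are the $2n$ triples forming the full star of a single vertex, whose boundaries bound once one adjoins a fourth edge as a cone point. A generator of $\tilde H_{(n-1)^2-1}(\Delta)\cong\mathbb Z$ is then represented by a map $\S^{(n-1)^2-1}\to\Delta$ inducing an isomorphism on all homology between simply connected CW complexes, hence a homotopy equivalence, with the base case $n=2$ handled directly. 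The main obstacle is precisely this last upgrade: Alexander duality delivers only homology, so the real work lies in the simple-connectivity verification (especially the borderline case $n=3$) that licenses Whitehead's theorem. Alternatively, one could bypass it by exhibiting an explicit acyclic discrete Morse matching on $\Delta$ with exactly two critical cells, in dimensions $0$ and $(n-1)^2-1$, which would yield the sphere directly as a two-cell CW complex.
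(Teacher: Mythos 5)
Your argument is correct, but it takes a genuinely different route from the paper. The paper proves the theorem by running a sequence of element matchings (discrete Morse theory) on $M_{n-1}(K_{n,n})$ in $n-1$ steps and showing that exactly one critical cell survives --- the subgraph consisting of a copy of $K_{n-1,n-1}$ together with the isolated vertices $a_n$ and $b_1$, which has $(n-1)^2$ edges --- so that \Cref{acyc4} gives the sphere directly; this is precisely the ``alternative'' you mention in your closing sentence. Your route instead identifies the combinatorial Alexander dual of $M_{n-1}(K_{n,n})$ with the complex of edge sets that fail to cover some vertex, computes it as $\S^{2n-3}$ via the nerve lemma applied to the cover by the $2n$ simplices $\sigma_v$, and transfers this through Alexander duality on $n^2$ vertices to get the homology of $\S^{(n-1)^2-1}$ (the index bookkeeping checks out, and the dual sphere has free homology so no Ext terms interfere); the upgrade to a homotopy equivalence via Hurewicz--Whitehead then rests on the elementary observation that $M_{n-1}(K_{n,n})$ contains the full $(n-2)$-skeleton of the simplex on its $n^2$ vertices, with the borderline case $n=3$ handled by coning off the six missing star-triangles with an edge not incident to the star's center, and $n=2$ checked by hand. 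Both arguments are complete. The paper's matching is self-contained and runs parallel to the $M_{n-2}(K_n)$ computation of Section 3, whereas your duality step essentially re-derives the stable statement the paper attributes to Reiner--Roberts and Dong and isolates the genuinely new content --- simple connectivity --- in a short separate step; your approach arguably makes it more transparent where the dimension $(n-1)^2-1$ comes from, at the cost of invoking heavier machinery (nerve lemma, Alexander duality, Whitehead's theorem) than the purely combinatorial matching.
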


This article is organized as follows: In Section $2$, we present some definitions and tools which are crucial for this article. Section $3$ is dedicated towards the study of $(n-2)$-matching complex of $K_n$. In Section $4$, we prove \Cref{theorem:completebipgraphmain}. Finally, in Section $5$, we outline some open problems.

%Henceforth, $[n]$ will denote the set $\{1,\dots,n\}$.
 
\section{Preliminaries}
A {\itshape graph} is an ordered pair $G=(V,E)$ where $V$ is called the set of vertices and $E$ is the set of (not necessarily all) cardinality-2 subsets of $V$, called the set of edges of $G$. The vertices $v_1, v_2 \in V$ are said to be adjacent, if $\{v_1,v_2\}\in E$. The number of vertices adjacent to a vertex $v$ in $G$ is called {\itshape degree} of $v$ in $G$, denoted deg$_G(v)$. 
%If deg$_G(v)=1$, then $v$ is called a {\itshape leaf} vertex. 
A vertex $v$ is said to be {\itshape adjacent} to an edge $e$, if $v$ is an end point of $e$, {\itshape i.e.}, $e=(v, w)$. 

A graph $H$ with $V(H) \subseteq V(G)$ and $E(H) \subseteq E(G)$ is called a {\it subgraph} of the graph $G$. For a nonempty subset $H$ of $E(G)$, the induced subgraph $G[H]$, is the subgraph of $G$ with $V(G[H]) =V(G) $ and edges $E(G[H]) = H$.

For $n \geq 1$, the {\itshape complete graph}, denoted $K_n$, is a graph with vertex set $V(K_n) = \{1, \ldots, n\}$ and  edge set $E(K_n) = \{\{i,j\} : 1 \leq i<j \leq n\}$. For $m,n \geq 1$, the {\itshape complete bipartite graph}, denoted $K_{m,n}$, is a graph with vertex set $V(K_{m,n}) = \{a_1, \ldots, a_m\}\sqcup \{b_1,\dots,b_n\}$ and  edge set $E(K_{m,n}) = \{\{a_i,b_j\} : 1 \leq i \leq m,~ 1 \leq j \leq n\}$.

\begin{definition}
\normalfont{An {\itshape (abstract) simplicial complex} $\K$ on a finite set $X$ is a collection of subsets of $X$ such that 
\begin{itemize}
    \item[$(i)$] $\emptyset \in \K$, and 
    \item[$(ii)$] if $\sigma \in \K$ and $\tau \subseteq \sigma$, then $\tau \in \K$.
\end{itemize}}
\end{definition}

 The elements  of $\K$ are called {\itshape simplices} of $\K$.  Inclusion-wise maximal simplices of $\K$ are called {\itshape facets} of $\K$. If $\sigma \in \K$ and $|\sigma |=k+1$, then $\sigma$ is said to be {\itshape $k$-dimensional}, denoted as dim$(\sigma)=k$ (here, $|\sigma|$ denotes the cardinality of $\sigma$ as a set). A complex  is called {\itshape pure} if all facets are of same dimension. Further, if $\sigma \in \K$ and $\tau \subseteq \sigma$ then $\tau$ is called a {\itshape face} of $\sigma$ and if $\tau \neq \sigma$ then $\tau$ is called a {\itshape proper face} of $\sigma$. The set of $0$-dimensional simplices of $\K$ is denoted by $V(\K)$, and its elements are called {\itshape vertices} of $\K$. A {\itshape subcomplex} of a simplicial complex $\K$ is a simplicial complex whose simplices are contained in $\K$. For $s\geq 0$, the {\itshape $k$-skeleton} of a simplicial complex $\K$, denoted $\K^{(s)}$, is the collection of all those simplices of $\K$ whose dimension is at most $s$.
 In this article, we do not distinguish between an abstract simplicial complex and its geometric realization. Therefore, a simplicial complex will be considered as a topological space, whenever needed. 
 
 For $j\geq 0$, simplicial complex $\K$ is called {\itshape $j$-connected} if, for all $d \in \{0,\dots , j\}$, every continuous map $f: \S^d \rightarrow \K$ has a continuous extension $g: \mathbb{B}^{d+1}\rightarrow \K$. Here, $\S^d$ and $\mathbb{B}^d$ denote the $d$-dimensional sphere and closed ball respectively.  By convention, $\K$ is $(-1)$-connected if it is nonempty.
 
 \begin{definition}\label{definition:connectivity}
  \normalfont{The {\itshape connectivity degree} of a simplicial complex $\K$ is the largest integer $j$ such that $\K$ is $j$-connected ($+\infty$ if $\K$ is $j$-connected for all $j$).} The {\itshape shifted connectivity degree} of $\K$ is obtained by adding one to the connectivity degree.
 \end{definition}

\begin{definition}
\normalfont For $k\geq 1$, a \emph{$k$-matching} of a graph $G$ is a subset of edges $H\subseteq E(G)$ such that any vertex $v \in G[H]$ has degree at most $k$. The \emph{$k$-matching complex} of a graph $G$, denoted $M_k(G)$, is a simplicial complex whose vertices are the edges of $G$ and faces are given by $k$-matchings of $G$. 
\end{definition}

{\bf Example:}
\Cref{fig:example of BD} consists of graph $G$ and $M_2(G)$. The complex $M_2(G)$ consists of $3$ maximal simplices, namely $\{e_1,e_2,e_4\}, \{e_1,e_3,e_4\}$ and $\{e_2,e_3,e_4\}$.

\begin{figure}[H]
	\begin{subfigure}[]{0.45 \textwidth}
		\centering
		\begin{tikzpicture}
 [scale=0.35, vertices/.style={draw, fill=black, circle, inner sep=1.0pt}]
        \node[vertices, label=below:{$1$}] (v1) at (0,0)  {};
		\node[vertices, label=below:{$2$}] (v2) at (6,0)  {};
		\node[vertices, label=above:{$3$}] (v3) at (-2.5,5)  {};
		\node[vertices, label=above:{$4$}] (v4) at (3.5,5)  {};
		
\foreach \to/\from in {v1/v2}
%\draw [-] (\to)--(\from);
\path (v1) edge node[pos=0.5,below] {$e_1$} (v2);
\path (v1) edge node[pos=0.5,left] {$e_2$} (v3);
\path (v1) edge node[pos=0.75,right] {$e_3$} (v4);
\path (v2) edge node[pos=0.25,above] {$e_4$} (v3);
\end{tikzpicture}\caption{$G$}
	\end{subfigure}
	\begin{subfigure}[]{0.45 \textwidth}
		\centering
	\begin{tikzpicture}
 [scale=0.35, every node/.style={draw=none}]
\node[label=left:{$e_4$}, draw, fill=black, circle, inner sep=1.0pt] (a) at (0,0) {};
\node[label=left:{$e_2$},draw, fill=black, circle, inner sep=1.0pt] (b) at (-2,4) {};
\node[label=left:{$e_3$},draw, fill=black, circle, inner sep=1.0pt] (c) at (-2,-4) {};
\node[label=right:{$e_1$},draw, fill=black, circle, inner sep=1.0pt] (f) at (4,0) {};
\node[label=left:{$e_4$},draw=none,inner sep=0.0pt] (g) at (-4,-0.5) {};

\foreach \to/\from in {a/b,a/c,a/f,b/c,b/f}
\draw [-] (\to)--(\from);
%\filldraw[fill=gray!60, draw=black] (2,3)--(9, 6.5)--(15,3)--cycle;
\filldraw[fill=gray!10, draw=black] (0,0)--(-2,4)--(-2,-4)--cycle;
\filldraw[fill=gray!10, draw=black] (0,0)--(-2,4)--(4,0)--cycle;
\filldraw[fill=gray!10, draw=black] (0,0)--(4,0)--(-2,-4)--cycle;
%\draw [dashed] (a)--(b);
\draw[dashed,thick, ->] (g) to (a);
\end{tikzpicture}\caption{$M_2(G)$}\label{fig:BD of G}
	\end{subfigure}
	\caption{$2$-matching complex of a graph $G$} \label{fig:example of BD}
\end{figure}
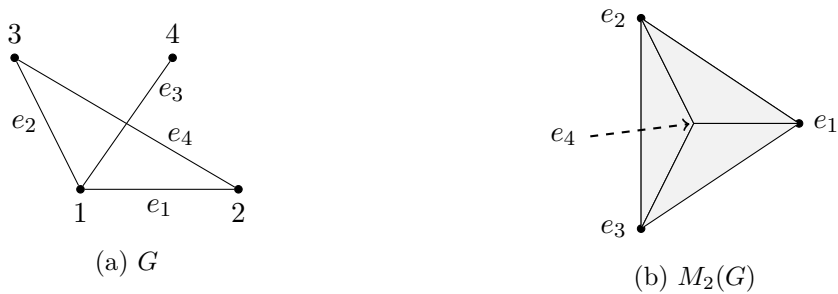

Hereafter, any face $H \in M_r(G)$ will be considered as an induced subgraph $G[H]$, whenever needed.

\subsection{Tools from discrete Morse theory}
We now discuss some tools needed from discrete Morse theory. The classical reference for this is \cite{Forman98}. However, here we closely follow \cite{Koz07} for notations and definitions.

\begin{definition}[{\cite[Definition 11.1]{Koz07}}]
\normalfont A {\itshape partial matching} on a poset $(P,<)$ is a subset $\mathcal{M} \subseteq P \times P$ such that
\begin{itemize}
\item[(i)] $(a,b) \in \mathcal{M}$ implies $ a \prec b;$ {\itshape i.e.}, $a < b$ and no $c$ satisfies $ a < c < b $, and
\item[(ii)] each $ a \in P $ belong to at most one element in $\mathcal{M}$.
\end{itemize}
\end{definition}

   Note that, $\mathcal{M}$ is a  partial matching on a poset $P$ if and only if 
  there exists  $\mathcal{A} \subset P$ and an injective map $\mu: \mathcal{A}
 \rightarrow P\setminus \mathcal{A}$ such that $\mu(a)\succ a$ for all $a \in \mathcal{A}$. 

 	An {\itshape acyclic matching} is a partial matching  $\mathcal{M}$ on the poset $P$ such that there does not exist a cycle
 	\begin{align*}
 		\mu(a_1)  \succ a_1 \prec \mu( a_2) \succ a_2  \prec \mu( a_3) \succ a_3 \dots   \mu(a_t) \succ a_t  \prec \mu(a_1), t\geq 2.
 \end{align*}

For an acyclic partial matching on $P$, those elements of $P$ which do not belong to the matching are called 
{\itshape critical}.

The main result of discrete Morse theory is the following.

\begin{theorem}[{\cite[Theorem 11.13]{Koz07}}]\label{acyc3}
Let $\K$ be a simplicial complex and $\mathcal{M}$ be an acyclic matching on the face poset of $\K$. Let $c_i$ denote the number of critical $i$-dimensional cells of $\K$ with respect to the matching $\mathcal{M}$. Then $\K$ is homotopy equivalent to a cell complex $\K_c$ with $c_i$ cells of dimension $i$ for each $i \geq 0$, plus a single $0$-dimensional cell in the case where the empty set is also paired in the matching.
\end{theorem}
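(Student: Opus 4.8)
The plan is to reduce the theorem to two elementary operations on (CW) complexes and then to use the acyclicity of $\mathcal{M}$ purely to sequence those operations in a valid order. The two operations are \emph{attaching a cell along its boundary} and performing an \emph{elementary collapse}. The single topological input, which I would establish first, is the collapse lemma: if $L$ is a subcomplex, $\sigma \prec \tau$ in its face poset, and $\tau$ is the \emph{only} cell of $L$ properly containing $\sigma$ (so $\sigma$ is a free face of $\tau$), then $L$ deformation retracts onto $L \setminus \{\sigma,\tau\}$; concretely one pushes the free face $\sigma$ across $\tau$. Dually, adjoining such a pair $\{\sigma,\tau\}$ to a complex in which $\sigma$ becomes free is a homotopy equivalence (an \emph{expansion}) that adds no cell.

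The combinatorial heart is an ordering claim, which is exactly where acyclicity enters. Write $P$ for the face poset of $\K$, and recall $\mathcal{M}$ matches each $a \in \mathcal{A}$ with $\mu(a)\succ a$, leaving the critical cells unmatched. I claim that acyclicity of $\mathcal{M}$ is equivalent to the existence of a linear order $\rho_1,\rho_2,\dots,\rho_N$ of all cells such that (i) $\rho_i \subsetneq \rho_j$ implies $i<j$, and (ii) each matched pair occurs consecutively, with $a$ immediately preceding $\mu(a)$. To see this, contract each matched pair $\{a,\mu(a)\}$ to a single node and consider the relation induced on the contracted set by the cover relations of $P$; a directed cycle in this relation is precisely an alternating cycle $\mu(a_1)\succ a_1 \prec \mu(a_2)\succ a_2 \prec \cdots \prec \mu(a_1)$, forbidden exactly when $\mathcal{M}$ is acyclic. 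Hence the contracted relation is a genuine partial order, any linear extension of it lifts by expanding each contracted node into the block $a,\mu(a)$, and the resulting order satisfies (i) and (ii).

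Finally I would run the filtration argument. Using this order, set $\K_0=\emptyset$ and $\K_i=\K_{i-1}\cup\{\rho_i\}$; condition (i) guarantees each $\K_i$ is a subcomplex, since all proper faces of $\rho_i$ precede it. Two cases arise. If $\rho_i$ is a critical cell, all its proper faces already lie in $\K_{i-1}$, so passing to $\K_i$ attaches one cell of dimension $\dim\rho_i$ along its boundary. If instead $\rho_i=a$ is the lower half of a matched pair, then by (ii) the next cell $\rho_{i+1}=\mu(a)$ is adjoined immediately; using (i) one checks that no cell of $\K_{i-1}\cup\{a,\mu(a)\}$ other than $\mu(a)$ properly contains $a$ (any such cell would both precede $a$ and contain it, contradicting the refinement of inclusion), so $a$ is free and the collapse lemma makes the passage from $\K_{i-1}$ to $\K_{i+1}$ a homotopy equivalence adding no cell. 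Tracking these moves, $\K$ is homotopy equivalent to a CW complex built by attaching exactly one cell of dimension $i$ for each critical $i$-cell, namely $c_i$ cells in dimension $i$, which is the claimed $\K_c$; the extra $0$-cell accounts for the case where $\emptyset$ is itself matched and collapsed away, so a basepoint must be reinstated. I expect the main obstacle to be the bookkeeping of the attaching maps needed to realize $\K_c$ as an honest CW complex—the homotopy equivalences from the collapses must be composed to transport the boundaries of the critical cells—rather than the ordering step, which is clean once the contraction trick is in place.
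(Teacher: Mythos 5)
The paper offers no proof of this statement at all---it is imported verbatim from Kozlov's book as \cite[Theorem 11.13]{Koz07}---so there is no internal argument to compare against; I can only assess your proposal on its merits, and it is essentially a correct reconstruction of the standard proof, following the same route Kozlov does. Your ordering claim (acyclicity holds if and only if the face poset admits a linear extension in which each matched pair $a,\mu(a)$ occurs consecutively) is exactly the standard characterization of acyclic matchings, and the contraction argument is the right one; to make it rigorous you need the observation you left implicit, namely that in a directed cycle of contracted blocks a dimension count around the cycle forces every block to be a matched pair entered at $\mu(a_i)$ and exited at $a_i$ (critical singletons, and pairs traversed any other way, can be shortcut by transitivity), which is what turns an abstract cycle in the contracted relation into an alternating cycle of the stated form. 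The filtration step is sound: condition (i) makes each $\K_i$ a subcomplex, a critical cell is attached along its full boundary sphere, and your verification that $a$ is a free face of $\mu(a)$ in $\K_{i-1}\cup\{a,\mu(a)\}$ is correct, so each matched pair is an elementary expansion. The attaching-map bookkeeping you flag at the end is indeed the remaining technical content separating the sketch from a proof: one maintains inductively a homotopy equivalence $h_i\colon \K_i \to L_i$ onto the partially built $\K_c$, attaches each critical cell to $L_i$ along $h_i$ composed with the inclusion of its boundary sphere (made cellular up to homotopy), and invokes the gluing lemma for cofibrations to see the result is again a homotopy equivalence; this is standard, so it is an acknowledged, fillable gap rather than an error. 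One refinement: when $\emptyset$ is matched with a vertex $v$, the pair $(\emptyset,v)$ cannot be collapsed because $\emptyset$ is not a geometric cell, so $v$ simply survives as the promised extra $0$-cell---that is the precise mechanism behind your ``basepoint must be reinstated'' remark.
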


The following can be inferred from Theorem \ref{acyc3}.

\begin{corollary}\label{acyc4}
If an acyclic matching on the face poset of $\K$ has critical cells only in a fixed dimension $i$, then $\K$ is homotopy equivalent to a wedge of $i$-dimensional spheres.
\end{corollary}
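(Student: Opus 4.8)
The plan is to derive this directly from \Cref{acyc3} by analyzing the cell complex it produces. Under the hypothesis, the acyclic matching $\mathcal{M}$ has all of its critical cells in a single dimension $i\geq 0$; write $c_i$ for their number, so that there are no critical cells in any other dimension. The first observation I would record is that the empty face, which lies in dimension $-1\neq i$, cannot be critical and hence must be paired by $\mathcal{M}$. Consequently, \Cref{acyc3} tells us that $\K$ is homotopy equivalent to a CW complex $\K_c$ whose cells are exactly the $c_i$ cells of dimension $i$ together with one additional $0$-cell arising from the pairing of the empty set. It therefore suffices to identify the homotopy type of this very sparse cell complex.

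Next I would split into two cases according to the value of $i$. If $i=0$, then $\K_c$ consists only of $0$-cells, namely the $c_i$ critical ones and the single extra one, for a total of $c_i+1$ points and no higher cells; this discrete space is precisely a wedge of $c_i$ copies of $\S^0$, since $\bigvee_{c_i}\S^0$ has $c_i+1$ points. If $i\geq 1$, the key structural point is that $\K_c$ has no cells in dimensions $1,\dots,i-1$, so its $(i-1)$-skeleton is exactly the single $0$-cell, a point. Each of the $c_i$ cells of dimension $i$ is thus attached along a map $\S^{i-1}\to\{\mathrm{pt}\}$, which is necessarily the constant map and in particular null-homotopic.

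Finally, I would conclude that attaching an $i$-cell to a point by a constant map collapses the boundary $\S^{i-1}$ of the cell to that point, producing a copy of $\S^i$; carrying this out for all $c_i$ cells at the common $0$-cell yields the wedge $\bigvee_{c_i}\S^i$. Hence $\K_c\simeq\bigvee_{c_i}\S^i$ in both cases, and since \Cref{acyc3} gives $\K\simeq\K_c$, the corollary follows. I expect the only genuinely delicate point to be the bookkeeping of the base $0$-cell: without the extra vertex guaranteed by the pairing of the empty set there would be nothing on which to hang the $i$-cells when $i\geq 1$, so I would take care to state explicitly that the empty set is paired. The degenerate subcase $c_i=0$, in which there are no critical cells at all, is consistent with this description, as then $\K_c$ reduces to a single point, matching the empty wedge.
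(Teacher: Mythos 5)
Your proposal is correct and follows exactly the route the paper intends: the paper states this corollary as something ``inferred from \Cref{acyc3}'' without further detail, and your argument supplies precisely that inference (the empty face must be paired since it would otherwise be a critical cell of dimension $-1$, so $\K_c$ consists of one $0$-cell plus $c_i$ cells of dimension $i$, which is $\bigvee_{c_i}\S^i$). Your attention to the base-point $0$-cell and to the degenerate cases $i=0$ and $c_i=0$ is exactly the bookkeeping needed, and it matches how the corollary is later applied (e.g.\ to get $n-1$ spheres in \Cref{theorem:completegraphmain}).
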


In this article, by matching on a simplicial complex $\K$, we will mean that the matching is on the face poset of $\K$. 

\subsection{Morse matching induced by a sequence of vertices}Let $\K$ be a simplicial complex and $N_x= \{\sigma \in \K : \sigma \setminus \{x\},\ \sigma \cup  \{x\} \in \K\}$ be a subcomplex of $\K$, where $x \in V(\K)$. Define a matching on $\K$ using $x$ as follows: 
\begin{equation*}
M_x =\{(\sigma \setminus \{x\} , \ \sigma \cup \{x\}) :  \sigma \setminus\{ x\}, \ \sigma \cup \{x\} \in \K\}.
\end{equation*}

Note that the condition $\sigma\setminus \{x\} \in \K$, for $N_x$ and $M_x$ above, is superfluous since $\K$ is a simplicial complex. However, this is not the case when we define an element matching on a subset of a simplicial complex (for instance, see \Cref{proposition:sequence of element matching}). 
\begin{definition}
\normalfont Matching $M_x$, as defined above, is called an {\itshape element matching} on $\K$ using vertex $x$.
\end{definition}

The following result tells us that an element matching is always acyclic.

\begin{lemma}[{\cite[Lemma 3.2]{NA18}}]\label{lemma:element matching}
The matching $M_x$ is an acyclic matching on $\K$ and  perfect acyclic matching on $N_x$.
\end{lemma}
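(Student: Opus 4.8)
The plan is to verify both assertions directly from the definitions: that $M_x$ satisfies the two axioms of a partial matching and admits no alternating cycle, and that every face of $N_x$ is covered by some pair of $M_x$. Throughout I would write a typical pair of $M_x$ as $(\tau,\ \tau\cup\{x\})$ where $x\notin\tau$ and both $\tau,\ \tau\cup\{x\}\in\K$; in the notation of the definition this is $\tau=\sigma\setminus\{x\}$. The lower elements $\tau$ are exactly the domain $\A$ of the associated injection $\mu$, with $\mu(\tau)=\tau\cup\{x\}$.

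First I would check that $M_x$ is a partial matching. Axiom (i) is immediate: since $x\notin\tau$ we have $\tau\subsetneq\tau\cup\{x\}$ with $|\tau\cup\{x\}|=|\tau|+1$, so $\tau\prec\tau\cup\{x\}$ is a covering relation in the face poset. For axiom (ii) I would observe that any face $\rho\in\K$ can occupy at most one slot of one pair: if $x\notin\rho$ then $\rho$ can only be the lower element of the unique candidate pair $(\rho,\ \rho\cup\{x\})$, whereas if $x\in\rho$ then $\rho$ can only be the upper element of the unique candidate pair $(\rho\setminus\{x\},\ \rho)$. In particular no face is simultaneously a lower and an upper element, so each face lies in at most one pair.

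Next, acyclicity. The key observation — and essentially the only idea required — is that the unique codimension-one face of $\tau\cup\{x\}$ not containing $x$ is $\tau$ itself, since every other codimension-one face has the form $(\tau\setminus\{y\})\cup\{x\}$ and hence contains $x$. I would argue by contradiction: suppose there is a cycle $\mu(a_1)\succ a_1\prec\mu(a_2)\succ a_2\prec\cdots\prec\mu(a_t)\succ a_t\prec\mu(a_1)$ with $t\geq 2$ and the $a_i$ pairwise distinct. Each $a_i$ is a lower element of $M_x$, so $x\notin a_i$ and $\mu(a_i)=a_i\cup\{x\}$. The covering relation $a_i\prec\mu(a_{i+1})=a_{i+1}\cup\{x\}$ (indices mod $t$) exhibits $a_i$ as a codimension-one face of $a_{i+1}\cup\{x\}$; since $x\notin a_i$, the observation forces $a_i=a_{i+1}$, contradicting distinctness. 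Hence no such cycle exists and $M_x$ is acyclic.

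Finally I would show the matching is perfect on $N_x$. Given $\sigma\in N_x$, by definition $\sigma\setminus\{x\},\ \sigma\cup\{x\}\in\K$. If $x\notin\sigma$ then $(\sigma,\ \sigma\cup\{x\})$ is a pair of $M_x$, and if $x\in\sigma$ then $(\sigma\setminus\{x\},\ \sigma)$ is a pair of $M_x$; in either case $\sigma$ is matched. A direct check from the definition of $N_x$ shows that both members of each such pair again lie in $N_x$, so $M_x$ restricts to a matching on $N_x$ with no critical cells, and its acyclicity is inherited from the ambient acyclicity just established. There is no genuine obstacle in this proof; the only point requiring care is the bookkeeping in the cycle argument, namely confirming that the sole $x$-free codimension-one face of $\tau\cup\{x\}$ is $\tau$, which is precisely what rules out any nontrivial alternating cycle.
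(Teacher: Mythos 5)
Your proof is correct and complete: the verification that $M_x$ is a partial matching, the key observation that the unique codimension-one face of $\tau\cup\{x\}$ not containing $x$ is $\tau$ itself (which collapses any alleged alternating cycle to $a_i=a_{i+1}$), and the check that every face of $N_x$ is matched with both members of each pair staying inside $N_x$ are exactly what is needed. Note that the paper itself gives no proof of this lemma --- it is quoted from Lemma 3.2 of the reference \cite{NA18} --- so there is nothing internal to compare against; your argument is the standard one for element matchings and could serve as a self-contained substitute for the citation.
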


To obtain an acyclic matching on a simplicial complex $\K$, the next result tells us that one can define a sequence of element matchings on $\K$ using its vertices. 

\begin{proposition}[{\cite[Proposition 3.1]{SSA19}}]\label{proposition:sequence of element matching}
Let $\K_1$ be a simplicial complex and $x_1,x_2,\dots,x_n$ be vertices of $\K_1$. Then, $\bigsqcup\limits_{i=1}^{n} M_{x_i}$ is an acyclic matching on $\K_1$, where $M_{x_i} = \{ (\sigma \setminus \{x_i\} , \ \sigma \cup  \{x_i\}) :  \sigma \setminus\{ x_i\}, \ \sigma \cup \{x_i\} \in \K_i \}$ and $\K_{i+1}=\K_i \setminus \{\sigma : (\sigma\setminus \{x_i\}, \sigma \cup \{x_i\})  \in M_{x_i}\}$ for $i \in \{1,\dots,n\}$.
\end{proposition}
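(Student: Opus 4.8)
The statement has two independent parts: that $\bigsqcup_{i=1}^{n} M_{x_i}$ is a partial matching, and that it is acyclic; the second is the substance. The first is immediate. A face leaves the process exactly once, at the step where it is first matched, so the sets of faces removed at distinct steps (namely $\K_i\setminus\K_{i+1}$) are pairwise disjoint and no face is used twice; and within a single step $M_{x_i}$ is a matching on $\K_i$ each of whose pairs $(\sigma\setminus\{x_i\},\sigma\cup\{x_i\})$ differs by the single element $x_i$ and is therefore a covering relation. For acyclicity my plan is to induct on $n$, the base case $n=1$ being exactly \Cref{lemma:element matching}. For the inductive step I would split the matching as $M=M_{x_1}\sqcup M'$, where $M'=\bigsqcup_{i=2}^{n} M_{x_i}$ is the matching produced by running the shortened sequence $x_2,\dots,x_n$ on $\K_2$.

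The heart of the step is to show that no $M_{x_1}$-pair can occur in a simple alternating cycle of $M$; here I would exploit crucially that $\K_1$ is a genuine simplicial complex. Suppose some pair $(a_{j+1},\mu(a_{j+1}))$ of the cycle lies in $M_{x_1}$, so $\mu(a_{j+1})=a_{j+1}\cup\{x_1\}$. The covering relation feeding $\mu(a_{j+1})$ from below along the cycle is $a_j\prec\mu(a_{j+1})$, whence $a_j=\mu(a_{j+1})\setminus\{y\}$ for some vertex $y$; distinctness of the lower faces forces $y\neq x_1$ (otherwise $a_j=a_{j+1}$), so $x_1\in a_j$. Since $\K_1$ is downward closed, $a_j\setminus\{x_1\}\in\K_1$, so $a_j\in N_{x_1}$ is matched at the first step, and because $x_1\in a_j$ it is matched \emph{downward}, to $a_j\setminus\{x_1\}$; that is, $a_j$ is the larger element of its pair in $M$. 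This contradicts $a_j$ being a lower element of the cycle (where $\mu(a_j)\succ a_j$). Hence every alternating cycle of $M$ consists solely of $M'$-pairs, lies entirely in $\K_2$, and is an alternating cycle of $M'$.

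To finish I need $M'$ to be acyclic on $\K_2$, and this is exactly where I expect the main obstacle to sit: $\K_2=\K_1\setminus N_{x_1}$ is in general \emph{not} a simplicial complex (it is a complex with the subcomplex $N_{x_1}$ deleted, so downward closure fails), and consequently the induction hypothesis cannot be invoked verbatim on $\K_2$. Worse, the clean argument of the previous paragraph does not simply recurse, because its key move—that a face of $\K_i$ containing the peeled vertex $x_i$ is automatically matched by $M_{x_i}$—relied on downward closure, and one can exhibit faces of $\K_2$ whose relevant subface has already been removed. The way I would resolve this is to reformulate the inductive statement so that the ground object is allowed to be an arbitrary subset of a Boolean lattice rather than a simplicial complex: the base case survives, since the acyclicity of a single element matching uses only that in a putative alternating $M_{x}$-cycle the connecting covering relation would force two consecutive lower faces to coincide, an argument insensitive to downward closure. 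The monochromatic reduction must then be carried out without the observation about faces containing $x_1$; in that generality I would instead run the reduction off a pair of the cycle whose common removal step is minimal, analysing the covering relation landing on it together with minimality of that step. Making this last reduction go through cleanly—or, alternatively, packaging the whole induction through the standard principle that an acyclic matching supported on the critical cells of an acyclic matching assembles to an acyclic matching on the ambient poset, whose hypotheses one must then verify from the element-matching structure—is the technical core that the remainder of the proof must supply.
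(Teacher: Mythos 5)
The paper itself offers no proof of this proposition: it is quoted from \cite{SSA19} and remarked to be a special case of the Cluster Lemma, so your attempt has to stand on its own. The portions you actually carry out are correct. The partial-matching verification is fine, and your reduction for the first step is sound: if an alternating cycle contained a pair $(a_{j+1},\mu(a_{j+1}))\in M_{x_1}$, then the covering relation $a_j\prec\mu(a_{j+1})$ together with distinctness forces $x_1\in a_j$, and downward closure of $\K_1$ makes $a_j$ the \emph{upper} element of an $M_{x_1}$-pair, contradicting that $a_j$ is a lower element of the cycle. But the proof is not finished, and the difficulty you defer at the end is the entire content of the proposition, not a routine verification.

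Concretely, both of your proposed repairs hit the same obstruction. Once $\K_2=\K_1\setminus N_{x_1}$ is no longer downward closed, a face $\sigma\in\K_i$ with $x_i\in\sigma$ need \emph{not} be matched at step $i$, because $\sigma\setminus\{x_i\}$ may already have been deleted at an earlier step. This kills the recursion of your key move, and it equally defeats the minimal-step variant: choosing a pair of the cycle matched at the minimal step $i_0$ and examining the incoming covering relation gives $x_{i_0}\in a_j$ and $a_j\in\K_{i_0}$, but you cannot conclude $a_j\setminus\{x_{i_0}\}\in\K_{i_0}$, so no contradiction follows. The ``standard packaging'' you mention (the Cluster Lemma over the linearly ordered steps) also does not apply as stated: it requires each union $\K_1\setminus\K_{i+1}$ of the first $i$ blocks to be a subcomplex, equivalently that the step at which a face is matched be monotone under inclusion, and this fails. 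For instance, take $\K_1$ generated by the facets $\{2,3,4\}$ and $\{1,3\}$ with $x_i=i$: step $1$ matches $(\emptyset,\{1\})$ and $(\{3\},\{1,3\})$, step $2$ matches $(\{4\},\{2,4\})$ and $(\{3,4\},\{2,3,4\})$, and step $3$ matches $(\{2\},\{2,3\})$, so $\{2,4\}$ is matched at step $2$ while its face $\{2\}$ is matched only at step $3$, and $\K_1\setminus\K_3$ contains $\{2,4\}$ but not $\{2\}$. (The resulting matching is still acyclic, but no monotonicity argument along the linear order will show it.) So what you label the ``technical core'' is a genuine gap: the argument you have proves acyclicity only of $M_{x_1}$ relative to the rest, and the acyclicity of $\bigsqcup_{i\geq 2}M_{x_i}$ on the non-complex $\K_2$ remains unproved.
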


Note that the above result is a particular case of a more general result called Cluster Lemma, which has been re-discovered by many authors over and over (see for instance, \cite[Lemma 4,2]{Jon08} or \cite[Lemma 4.1]{Hersh05}).  \Cref{proposition:sequence of element matching} will be used repeatedly in this article.

\section{Proof of \texorpdfstring{\Cref{theorem:completegraphmain}}{1}}

 The set of vertices of $K_n$ will be denoted by $[n]$ and the notation $G+\{i,j\}$ will mean that add the edge $\{i,j\}$ to $G$ if it is not already there. Similarly, $G-\{i,j\}$ will mean that delete the edge $\{i,j\}$ from $G$ if it is available. 

The aim of this section is to determine the homotopy type of $M_{n-2}(K_n)$ for $n\geq 3$. We do so by defining a sequence of element matchings on $M_{n-2}(K_n)$ in $n-1$ steps.

\vspace*{0.3cm}

\noindent{\bf \large{Step $\mathbf{1}$:}}
Let $\A_{1,1}=M_{n-2}(K_n)$. We first use the vertex $\{1,2\}$ of $\A_{1,1}$ for element matching. Define,
\begin{equation*}
    \begin{split}
        M_{1,2} & = \{(G,G + \{1,2\} ) : \{1,2\} \notin E(G)
        \mathrm{~and~} G, G +\{1,2\} \in \A_{1,1} \}, \\
        N_{1,2} & = \{G \in \A_{1,1} : (G-\{1,2\} ,G+\{1,2\}) \in M_{1,2}\}, \text{ and}\\
        \A_{1,2} & = \A_{1,1} \setminus N_{1,2}.
    \end{split}
\end{equation*}

Using \Cref{lemma:element matching}, we get that $M_{1,2}$ is an acyclic matching on $\A_{1,1}$ with $\A_{1,2}$ as the set of the critical cells.

\begin{claim}\label{claim:step1complete}
$\A_{1,2} =  \{G \in \A_{1,1} : \{1,2\} \notin E(G),~ \d_G(2)<n-2, ~\d_G(1)=n-2\} \sqcup \{G \in \A_{1,1} : \{1,2\} \notin E(G),~ \d_G(2)=n-2\}.$
\end{claim}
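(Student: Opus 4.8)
The plan is to identify the critical cells of the element matching $M_{1,2}$ directly, and then partition them according to the degree of vertex $2$. By \Cref{lemma:element matching}, $M_{1,2}$ is a perfect acyclic matching on $N_{1,2}$, so the critical (unmatched) cells are exactly $\A_{1,2} = \A_{1,1} \setminus N_{1,2}$. Thus the task reduces to deciding, for each face $G \in \A_{1,1}$, whether $G$ participates in the matching.

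First I would dispose of the faces that contain the edge $\{1,2\}$. If $\{1,2\} \in E(G)$, write $G = (G-\{1,2\}) \cup \{1,2\}$; since $\A_{1,1}$ is a simplicial complex, $G-\{1,2\} \in \A_{1,1}$ automatically, so $(G-\{1,2\},\, G) \in M_{1,2}$ and $G$ is matched downward. Hence no face containing $\{1,2\}$ can be critical, which accounts for the condition $\{1,2\} \notin E(G)$ appearing in both sets of the claim.

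Next I would analyze the faces $G$ with $\{1,2\} \notin E(G)$. Such a $G$ is matched (upward, to $G+\{1,2\}$) precisely when $G+\{1,2\} \in \A_{1,1}$, and it is critical precisely when $G+\{1,2\} \notin \A_{1,1}$. The key observation is that passing from $G$ to $G+\{1,2\}$ raises only the degrees of vertices $1$ and $2$, each by one, and leaves every other degree unchanged. Since $G \in \A_{1,1}$ already forces $\d_G(1), \d_G(2) \leq n-2$, the graph $G+\{1,2\}$ fails to lie in $M_{n-2}(K_n)$ if and only if $\d_G(1) = n-2$ or $\d_G(2) = n-2$ (in which case the corresponding degree becomes $n-1 > n-2$). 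This yields
\[
\A_{1,2} = \{\, G \in \A_{1,1} : \{1,2\} \notin E(G),\ \d_G(1) = n-2 \text{ or } \d_G(2) = n-2 \,\}.
\]

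Finally, to match the stated form I would split this set according to whether $\d_G(2)$ attains the bound $n-2$. Because $\d_G(2) \leq n-2$ for every $G \in \A_{1,1}$, the disjunction ``$\d_G(1) = n-2$ or $\d_G(2) = n-2$'' decomposes as the disjoint union of the case $\d_G(2) = n-2$ and the case $\d_G(2) < n-2$ together with $\d_G(1) = n-2$; these are mutually exclusive and exhaust the disjunction, giving exactly the two pieces in the claim. I do not expect any real obstacle here: the argument is a routine characterization of unmatched cells, and the only point demanding care is the observation that adjoining $\{1,2\}$ perturbs solely the degrees of $1$ and $2$, so that membership of $G+\{1,2\}$ in the complex is governed entirely by these two degrees.
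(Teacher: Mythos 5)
Your proposal is correct and follows essentially the same route as the paper: dispose of faces containing $\{1,2\}$ as matched downward, observe that for the remaining faces criticality is equivalent to $\d_G(1)=n-2$ or $\d_G(2)=n-2$ since adjoining $\{1,2\}$ only raises those two degrees, and then split the disjunction on whether $\d_G(2)=n-2$. The paper organizes this as a two-way set inclusion rather than a direct characterization, but the content is identical.
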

\begin{proof}[Proof of \Cref{claim:step1complete}]
For simplicity of notations, let
\begin{equation}\label{eq:B12}
\begin{split}
\B_{1,2} & = \{G \in \A_{1,1} : \{1,2\} \notin E(G),~ \d_G(2)<n-2, ~\d_G(1)=n-2\}, \text{ and}\\
\C_{1,2} & =\{G \in \A_{1,1} : \{1,2\} \notin E(G),~ \d_G(2)=n-2\}.
\end{split}
\end{equation}

Clearly, $\B_{1,2}$ are $\C_{1,2}$ are disjoint sets. Further, if $G \in \B_{1,2} \sqcup \C_{1,2}$, then $\{1,2\} \notin E(G)$ and $\d_G(1)=n-2$ or $\d_G(2)=n-2$. This implies that $G+\{1,2\} \notin \A_{1,1}$, and hence $G \notin N_{1,2}$. Therefore, $\B_{1,2} \sqcup \C_{1,2} \subseteq \A_{1,2}$.

Now consider $G \in \A_{1,2}$. If $\{1,2\}\in E(G)$, then clearly $(G-\{1,2\},G) \in M_{1,2}$ which is a contradiction. Further, if $\{1,2\} \notin E(G)$, $\d_G(1) <n-2$ and $\d_G(2)<n-2$ then also $G+\{1,2\} \in N_{1,2}$. Therefore, if $G \in \A_{1,2}$ then $\{1,2\} \notin E(G)$ and, either $\d_G(2)=n-2$ or $\d_G(1)=n-2$ whenever $\d_G(2)<n-2$.
\end{proof}

We now extend the matching $M_{1,2}$ on $\A_{1,1}$ by defining a sequence of element matchings on $\A_{1,2}$ using vertices $\{1,i\}$ for each $i \in \{3,\dots,n\}$. For $i \in \{3,\dots,n\}$, define
\begin{equation*}\label{eq:matchingstep1}
    \begin{split}
        M_{1,i} & = \{(G,G + \{1,i\} ) : \{1,i\} \notin E(G) \mathrm{~and~} G, G +\{1,i\} \in \A_{1,i-1} \}, \\
        N_{1,i} & = \{G \in \A_{1,i-1} : (G-\{1,i\}, G+\{1,i\})  \in M_{1,i}\}, \text{ and}\\
        \A_{1,i} & = \A_{1,i-1} \setminus N_{1,i}.
    \end{split}
\end{equation*}

Using \Cref{proposition:sequence of element matching}, we get that $\bigsqcup\limits_{2\leq i \leq n}M_{1,i}$ is an acyclic matching on $\A_{1,1}$ with $\A_{1,n}$ as the set of the critical cells. We now analyze cells of $\A_{1,n}$.

\begin{claim}\label{claim:step1complete2}
$\A_{1,n} =  \B_{1,2} \sqcup \{G \in \A_{1,1} : \{1,i\} \notin E(G) \mathrm{~and~} \d_G(i)=n-2 \mathrm{~for~each~} i \in \{2,\dots,n\}\}.$
\end{claim}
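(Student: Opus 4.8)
The plan is to describe the critical cells $\A_{1,n}$ of the combined acyclic matching $\bigsqcup_{2\le i\le n} M_{1,i}$ by exploiting that each $M_{1,i}$ merely toggles the single edge $\{1,i\}$ and leaves all other edges of a face untouched. Accordingly, I would write every face $G\in\A_{1,1}$ as $G = G^{\ast}\cup L$, where $G^{\ast}$ collects the edges of $G$ not incident to vertex $1$ and $L\subseteq\{\{1,i\}:2\le i\le n\}$ records the neighbours of $1$. Since $G^{\ast}$ is invariant under the entire sequence of matchings, the analysis decomposes into independent clusters, one for each fixed $G^{\ast}$; within a cluster the faces correspond to the sets $S\subseteq\{2,\dots,n\}$ of neighbours of $1$ satisfying $|S|\le n-2$ (degree bound at $1$) and $S\subseteq T(G^{\ast})$, where $T(G^{\ast})=\{i\in\{2,\dots,n\}:\d_{G^{\ast}}(i)\le n-3\}$ is the set of indices $i$ for which $\{1,i\}$ can still be added. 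On this family the maps $M_{1,2},\dots,M_{1,n}$ act as a sequential element matching that toggles the indices of $S$ in increasing order.

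The heart of the argument is then a case analysis on $|T(G^{\ast})|$. If $1\le |T(G^{\ast})|\le n-2$, every subset of $T(G^{\ast})$ is a face, so toggling the least index of $T(G^{\ast})$ already pairs all of them and the cluster contributes no critical cell. If $T(G^{\ast})=\emptyset$, the only face is $S=\emptyset$; this is critical, and since then $\d_{G^{\ast}}(i)=n-2$ for all $i\in\{2,\dots,n\}$, it is exactly the cell in which $1$ is isolated and every other vertex has degree $n-2$, i.e. the single cell of the set $D:=\{G:\{1,i\}\notin E(G)\text{ and }\d_G(i)=n-2\text{ for all }i\in\{2,\dots,n\}\}$. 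Finally, if $|T(G^{\ast})|=n-1$, so that $T(G^{\ast})=\{2,\dots,n\}$, the degree bound at $1$ forbids only the top set $S=\{2,\dots,n\}$; I would show that the sole survivor of the sequential toggling is $S=\{3,\dots,n\}$ and check that the corresponding face has $\d_G(1)=n-2$, $\{1,2\}\notin E(G)$ and $\d_G(2)<n-2$, placing it in $\B_{1,2}$. Running the correspondence backwards confirms that every cell of $\B_{1,2}$ arises in this way, so the clusters with $|T|=n-1$ contribute exactly $\B_{1,2}$, yielding $\A_{1,n}=\B_{1,2}\sqcup D$.

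I expect the last case, $|T(G^{\ast})|=n-1$, to be the main obstacle, since it is the only one in which the degree bound at vertex $1$ genuinely interferes with the Boolean toggling. The delicate point is to verify that $S=\{3,\dots,n\}$ is never matched at a later stage: at stage $i\ge 3$ its only candidate partner is $\{3,\dots,n\}\setminus\{i\}$, and one must confirm that this set was already deleted at stage $2$ (having been matched there with $\{2,\dots,n\}\setminus\{i\}$) and hence no longer lies in $\A_{1,i-1}$. This ``partner already removed'' bookkeeping, which is precisely what the sequential definition of $\A_{1,i}$ in \Cref{proposition:sequence of element matching} is designed to track, is the step requiring the most care; the cluster decomposition of the first paragraph is what makes it manageable, since it lets one carry out this bookkeeping one $G^{\ast}$ at a time.
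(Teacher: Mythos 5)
Your proof is correct, and it reorganizes the argument in a way that differs from the paper's. The paper proves the two inclusions directly on individual faces: it first checks that every $G\in\B_{1,2}\sqcup\C_1$ has a degree obstruction preventing it from being paired at any stage ($\d_{G+\{1,i\}}(i)=n-1$ for $\C_1$, and $\d_{G-\{1,i\}}(1)=n-3$ with $\d(2)<n-2$ for $\B_{1,2}$), and then, for the converse, takes a survivor $G\in\A_{1,n}$ with $\d_G(2)=n-2$ and $G\notin\C_1$, sets $i_0=\min\{i:\{1,i\}\in E(G)\text{ or }\d_G(i)<n-2\}$, and asserts that $G\in N_{1,i_0}$. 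You instead fibre the whole face poset over the vertex-$1$-free part $G^{\ast}$ and analyze the truncated Boolean lattice $\{S\subseteq T(G^{\ast}):|S|\le n-2\}$ in each fibre, with a clean trichotomy on $|T(G^{\ast})|$. The two arguments rest on the same underlying observation (the matchings only toggle edges at vertex $1$, so degrees away from vertex $1$ are invariant), but your version buys something real: the paper's ``it is easy to see that $G\in N_{1,i_0}$'' silently requires that the partner of $G$ at stage $i_0$ has not already been consumed at an earlier stage, and your explicit ``partner already removed'' bookkeeping inside a single fibre is precisely the verification that is elided there. Your approach also makes it structurally transparent why exactly two families survive --- they are the two degenerate fibre types $T=\emptyset$ and $T=\{2,\dots,n\}$ --- at the cost of being slightly longer to set up. One small point worth stating explicitly if you write this up: for $j<\min T(G^{\ast})$ the matching $M_{1,j}$ acts trivially on the fibre (no face there contains or can absorb $\{1,j\}$), so the first nontrivial stage really is $\min T(G^{\ast})$, which is what your middle case uses.
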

\begin{proof}[Proof of \Cref{claim:step1complete2}]
Define
\begin{equation}\label{eq:C1}
\C_1  = \{G \in \A_{1,1} : \{1,i\} \notin E(G) \mathrm{~and~} \d_G(i)=n-2 \mathrm{~for~each~} i \in \{2,\dots,n\}\}.
\end{equation}

Clearly, if $G \in \C_1$ then, for each $i \in \{3,\dots, n\}$, $\d_{G+(1,i)}(i)=n-1$  which implies that $G+\{1,i\} \notin \A_{1,2}$. This gives us that $G \notin N_{1,i}$ for any $i \in \{3,\dots, n\}$, thereby showing that $G \in \A_{1,n}$. If $G \in \B_{1,2}$ then $\d_G(2)<n-2$ and $\d_{G-(1,i)}(1)=n-3$ for each $i \in \{3,\dots,n\}$. This gives us that $G-\{1,i\} \notin \A_{1,2}$ for any $i \in \{3,\dots,n\}$. Therefore, $\B_{1,2}\sqcup \C_1 \subseteq A_{1,n}$.

Now consider $G \in \A_{1,n}\subseteq \A_{1,2}$. If $\d_G(2) < n-2$ then clearly $G \in \B_{1,2}$. Let $\d_G(2)=n-2$ and $G \notin \C_1$, {\itshape i.e.} $G \in \A_{1,2}\setminus \C_1$. Then, either $\{1,i\} \in E(G)$ for some $i \in \{2,\dots,n\}$ or $\d_G(j)<n-2$ for some $j \in \{2,\dots,n\}$.
Let $i_0 = \text{min}\{i : \{1,i\} \in E(G) \text{ or } \d_G(i)<n-2\}$. Since $G \in \A_{1,2}$, $i_0>2$. In this case, it is easy to see that $G \in N_{1,i_0}$ which contradicts the assumption that $G \in \A_{1,n}$. 
\end{proof}

Our idea to define matchings in each coming steps will be similar to this step. In step $i$, we are going to use vertices $\{i,j\}$ for $j\in \{i+1,\dots,n\}$. To make our writing easier in the next step, we observe the following.

\begin{proposition}\label{prop:C1 is critical}
 Let $\B_{1,2}$ and $\C_{1}$ be the sets as defined in \Cref{eq:B12} and \Cref{eq:C1} respectively. Then,
\begin{enumerate}
\item $|\C_1|=1$ and if $G \in \C_1$ then $|E(G)|= \binom{n-1}{2}$.
\item If $G \in \C_1$ then $G-\{i,j\}, G+\{i,j\} \notin \B_{1,2}$ for any $2\leq i < j \leq n$.
\end{enumerate}
\end{proposition}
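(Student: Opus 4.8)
The plan is to first pin down $\C_1$ completely and then read off both assertions. For part~(1), I would observe that the defining condition $\{1,i\}\notin E(G)$ for every $i\in\{2,\dots,n\}$ forces vertex $1$ to be isolated in $G$, so that all edges of $G$ lie among the vertices $\{2,\dots,n\}$. On these $n-1$ vertices the requirement $\d_G(i)=n-2$ for each $i\in\{2,\dots,n\}$ says exactly that every such vertex is adjacent to all of the remaining $n-2$ vertices in $\{2,\dots,n\}$; hence the subgraph induced on $\{2,\dots,n\}$ must be the complete graph $K_{n-1}$. This determines $G$ uniquely, namely the clique on $\{2,\dots,n\}$ together with the isolated vertex $1$, giving $|\C_1|=1$, and its edge count is $\binom{n-1}{2}$. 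I would also note in passing that this $G$ indeed has maximum degree $n-2$, so it is a legitimate face of $\A_{1,1}=M_{n-2}(K_n)$.

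For part~(2), the key point is that the unique $G\in\C_1$ has $\d_G(1)=0$. Since membership in $\B_{1,2}$ requires $\d_H(1)=n-2$, and since $n\geq 3$ gives $n-2\geq 1>0$, it suffices to note that adding or deleting an edge $\{i,j\}$ with $2\leq i<j\leq n$ never alters the degree of vertex~$1$, because both endpoints $i,j$ differ from $1$. Thus $\d_{G\pm\{i,j\}}(1)=0\neq n-2$, so neither $G+\{i,j\}$ nor $G-\{i,j\}$ can belong to $\B_{1,2}$.

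I do not anticipate a genuine obstacle here: the real content of the proposition is the explicit identification of $\C_1$ as a single graph, after which both claims reduce to routine degree bookkeeping. The only two points worth stating carefully are the reduction in part~(1) from the two families of conditions to \emph{``vertex $1$ isolated plus $K_{n-1}$ on the remaining vertices,''} and the use of the hypothesis $n\geq 3$ in part~(2), which is precisely what makes $n-2$ strictly positive and hence distinct from $\d_G(1)=0$.
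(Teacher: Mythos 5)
Your proof is correct and follows essentially the same route as the paper: identify the unique graph in $\C_1$ as the clique on $\{2,\dots,n\}$ plus the isolated vertex $1$, then rule out membership in $\B_{1,2}$ via the degree of vertex $1$. The only cosmetic difference is that the paper handles $G+\{i,j\}$ by noting $G+\{i,j\}=G$, whereas you treat addition and deletion uniformly through the observation that $\d_{G\pm\{i,j\}}(1)=0\neq n-2$; both come down to the same degree count.
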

\begin{proof}
If $G \in \C_1$ then vertex $1$ is isolated and any other two vertices are joined by an edge, {\itshape i.e.} $G$ is disjoint union of an isolated vertex $\{1\}$ and a complete graph on vertex set $\{2,\dots,n\}$. This proves the first part.

Now consider $G \in \C_1$. Since $\{i,j\} \in G$ for each $2\leq i < j \leq n$, $G+\{i,j\}=G \notin \B_{1,2}$. Further, $\d_{G-(i,j)}(1)=0$ for all $2\leq i < j \leq n$, which implies that $G-\{i,j\} \notin \B_{1,2}$ for any $2\leq i < j \leq n$.
\end{proof}

From \Cref{prop:C1 is critical}, it is clear that $\C_1$ is not going to play any role in any element matching using vertices $\{i,j\}$ where $2\leq i <j \leq n$. Therefore, for the time being it is enough to proceed with set $\B_{1,2}$.

\vspace*{0.3cm}

\noindent{\bf \large{Step $\mathbf{2}$:}} Let $\A_{2,2}=\B_{1,2}=\A_{1,n}\setminus \C_1$. We now define a sequence of element matchings on $\A_{2,2}$ using vertices $\{2,i\}$ for each $i \in \{3,\dots,n\}$. For $i \in \{3,\dots,n\}$, define
\begin{equation}\label{eq:matchingstep2}
    \begin{split}
        M_{2,i} & = \{(G,G + \{2,i\} ) : \{2,i\} \notin E(G) \mathrm{~and~} G, G +\{2,i\} \in \A_{2,i-1} \}, \\
        N_{2,i} & = \{G \in \A_{2,i-1} : (G-\{2,i\},G+\{2,i\}) \in M_{2,i}\}, \text{ and}\\
        \A_{2,i} & = \A_{2,i-1} \setminus N_{2,i}.
    \end{split}
\end{equation}

\begin{proposition}\label{prop:step2complete}
For $i \in \{2,\dots,n\}$, let $\A_{2,i}$ be as defined above.
\begin{enumerate}
\item $\A_{2,3} =  \B_{2,3} \sqcup \C_{2,3}$, where
\begin{equation}\label{eq:B23}
\begin{split}
\B_{2,3} & = \{G \in \A_{2,2} : \{2,3\} \notin E(G),~ \d_G(3)<n-2, ~\d_G(2)=n-3\}, \text{ and}\\
\C_{2,3} & =\{G \in \A_{2,2} : \{2,3\} \notin E(G),~ \d_G(3)=n-2\}.
\end{split}
\end{equation}

\item $\A_{2,n} =  \B_{2,3} \sqcup \C_2$, where
\begin{equation}
\C_2=\{G \in \A_{2,2} : \{2,i\} \notin E(G) \mathrm{~and~} \d_G(i)=n-2 \mathrm{~for~each~} i \in \{3,\dots,n\}\}.
\end{equation}

\item $|\C_2|=1$ and if $G \in \C_2$ then $|E(G)|= \binom{n-1}{2}$.

\item $\B_{2,3} = \emptyset$ if and only if $n=3$.

\item If $G \in \C_2$ and $\B_{2,3}\neq \emptyset$, then $G-\{i,j\}, G+\{i,j\} \notin \B_{2,3}$ for any $3\leq i < j \leq n$.
\end{enumerate}
\end{proposition}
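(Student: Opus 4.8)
The plan is to mirror the Step $1$ analysis almost verbatim, exploiting the crucial simplification that every $G \in \A_{2,2} = \B_{1,2}$ has $\d_G(1) = n-2$ together with $\{1,2\} \notin E(G)$, so vertex $1$ is permanently joined to all of $\{3,\dots,n\}$ and is never touched by the Step $2$ matchings (which only add or delete edges incident to vertex $2$). Thus each sub-step behaves exactly like the corresponding sub-step of Step $1$, with the roles of vertices $1,2$ shifted to vertices $2,3$, and with the degree threshold at vertex $2$ lowered from $n-2$ to $n-3$ because the edge $\{1,2\}$ is forbidden.

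For Part $(1)$ I would repeat the proof of \Cref{claim:step1complete}. A cell $G \in \A_{2,2}$ with $\{2,3\} \in E(G)$ is matched downward, since deleting $\{2,3\}$ only lowers $\d(2)$ and keeps $G$ in $\A_{2,2}$; hence it is not critical. A cell with $\{2,3\} \notin E(G)$ survives precisely when $G + \{2,3\} \notin \A_{2,2}$, which happens iff adding $\{2,3\}$ violates either the validity bound at vertex $3$ (giving $\d_G(3) = n-2$, the set $\C_{2,3}$) or the inherited constraint $\d(2) < n-2$ (giving $\d_G(2) = n-3$ with $\d_G(3) < n-2$, the set $\B_{2,3}$). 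These are exactly the two sets in \eqref{eq:B23}.

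For Part $(2)$ I would copy the proof of \Cref{claim:step1complete2}. The inclusion $\C_2 \subseteq \A_{2,n}$ holds because each vertex $i \geq 3$ already has degree $n-2$, so no $\{2,i\}$ can be added, and none is present to remove. For $\B_{2,3} \subseteq \A_{2,n}$ I would use that $G \in \B_{2,3}$ has $\d_G(2)=n-3$ with $\{1,2\},\{2,3\}\notin E(G)$, forcing vertex $2$ to be joined to all of $\{4,\dots,n\}$; deleting any $\{2,i\}$, $i \geq 4$, drops $\d(2)$ to $n-4$ while leaving $\d(3) < n-2$ intact, so the partner lies in neither $\B_{2,3}$ nor $\C_{2,3}$, i.e. outside $\A_{2,3} \supseteq \A_{2,i-1}$, and $G$ stays critical. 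For the reverse inclusion I would take $G \in \A_{2,n} \subseteq \A_{2,3}$; if $G \in \B_{2,3}$ we are done, so assume $G \in \C_{2,3}$, and suppose for contradiction $G \notin \C_2$. Then $i_0 = \min\{i : \{2,i\}\in E(G) \text{ or } \d_G(i) < n-2\}$ exists and exceeds $3$, because $G \in \C_{2,3}$ gives $\{2,3\}\notin E(G)$ and $\d_G(3)=n-2$.

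The main obstacle is the final step: verifying that the partner $G \pm \{2,i_0\}$ genuinely lies in $\A_{2,i_0-1}$, so that $G \in N_{2,i_0}$ contradicts criticality. I would settle this by the survival observation implicit in \Cref{claim:step1complete2}: for every $j$ with $3 \leq j < i_0$, minimality of $i_0$ gives $\{2,j\}\notin E(G)$ and $\d_G(j)=n-2$, and this same pattern at vertices $3,\dots,i_0-1$ is unchanged by toggling the edge $\{2,i_0\}$; hence at each sub-step $j < i_0$ the edge $\{2,j\}$ is absent (nothing to delete) and cannot be added (it would force $\d(j)=n-1$), so neither $G$ nor its partner is matched by $M_{2,3},\dots,M_{2,i_0-1}$, and both descend into $\A_{2,i_0-1}$, where they are paired by $M_{2,i_0}$. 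Parts $(3)$--$(5)$ are then the analogues of \Cref{prop:C1 is critical} and are short: the conditions defining $\C_2$ force vertex $2$ to be isolated and $\{1,3,\dots,n\}$ to span a complete graph, so $\C_2$ is a single cell with $\binom{n-1}{2}$ edges; $\B_{2,3}=\emptyset$ exactly when $n=3$, since $\d(2)=n-3$, $\d(1)=n-2$ and $\{1,2\},\{2,3\}\notin E(G)$ are jointly unsatisfiable only in that case, whereas for $n \geq 4$ an explicit member is easy to exhibit; and because the unique $G \in \C_2$ has $\d_G(2)=0$ while every member of $\B_{2,3}$ has $\d(2)=n-3 \geq 1$, modifying $G$ by an edge $\{i,j\}$ with $3 \leq i < j \leq n$ leaves $\d(2)$ untouched and hence never produces an element of $\B_{2,3}$.
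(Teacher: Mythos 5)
Your proposal is correct and follows essentially the same route as the paper's proof: the same element-matching case analysis for parts (1) and (2), the same minimal-index argument placing a non-critical $G$ in $N_{2,i_0}$ (which you justify in more detail than the paper's ``it is easy to see''), and the same identification of $\C_2$ as the single graph consisting of $K_{n-1}$ on $\{1,3,\dots,n\}$ plus the isolated vertex $2$. One cosmetic point: in part (4) the emptiness for $n=3$ also relies on the condition $\d_G(3)<n-2$ (the three conditions you list are satisfiable even when $n=3$, e.g.\ by the single edge $\{1,3\}$), but the claim and the rest of the argument are sound.
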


\begin{proof}
\begin{enumerate}
\item Clearly, $\B_{2,3}$ are $\C_{2,3}$ are disjoint sets. Further, if $G \in \B_{2,3} \sqcup \C_{2,3}$, then $\{2,3\} \notin E(G)$ and $\d_G(2)=n-3$ or $\d_G(3)=n-2$. This implies that $G+\{2,3\} \notin \A_{2,2}$, and hence $G \notin N_{2,3}$. Therefore, $\B_{2,3} \sqcup \C_{2,3} \subseteq \A_{2,3}$.

Now consider $G \in \A_{2,3}$. If $\{2,3\}\in E(G)$, then clearly $(G-\{2,3\},G) \in M_{2,3}$ which is a contradiction. Further, if $\{2,3\} \notin E(G)$ and $\d_G(2)<n-3$ as well as $\d_G(3)<n-2$ then also $G+\{2,3\} \in N_{2,3}$. Therefore, if $G \in \A_{2,3}$ then $\{2,3\} \notin E(G)$ and either $\d_G(3)=n-2$ or $\d_G(2)=n-3$ whenever $\d_G(3)<n-2$.

\item If $G \in \C_2$ then, for each $i \in \{3,4,\dots, n\}$, $\d_{G+\{2,i\}}(i)=n-1$  which implies that $G+\{2,i\} \notin \A_{2,2}$. This gives us that $G \notin N_{2,i}$ for any $i \in \{3,\dots, n\}$, thereby showing that $G \in \A_{2,n}$. If $G \in \B_{2,3}$ then $\{2,3\}\notin E(G),~ \d_G(3)<n-2$ and $\d_{G-\{2,i\}}(2)<n-3$ for each $i \in \{4,\dots,n\}$ implying that $G-\{2,i\} \notin \A_{2,3}$ for any $i \in \{4,\dots,n\}$. Therefore, $\B_{2,3}\sqcup \C_2 \subseteq A_{2,n}$.

Let $G \in \A_{2,n}\subseteq \A_{2,3}$. If $\d_G(3) < n-2$ then clearly $G \in \B_{2,3}$. Let $\d_G(3)=n-2$ and $G \notin \C_2$, {\itshape i.e.} $G \in \A_{2,3}\setminus \C_2$. Then, either $\{2,i\} \in E(G)$ for some $i \in \{4,\dots,n\}$ or $\d_G(j)<n-2$ for some $j \in \{4,\dots,n\}$.
Let $i_0 = \text{min}\{i : \{2,i\} \in E(G) \text{ or } \d_G(i)<n-2\}$. It is easy to see that $G \in N_{2,i_0}$ which contradicts the assumption that $G \in \A_{2,n}$.

\item If $G \in \C_2$ then it is easy to see that $G$ is disjoint union of an isolated vertex $\{2\}$ and a complete graph on vertex set $\{1,3,4,\dots,n\}$.

\item To prove this, we just need to look at the definition of $\B_{2,3}$ in expanded form. 
\begin{equation}\label{eq:simple b23}
\begin{split}
\B_{2,3} & = \{G \in \A_{2,2} : \{2,3\} \notin E(G),~ \d_G(3)<n-2, ~\d_G(2)=n-3\}\\
 & =\{G \in M_{n-2}(K_n) : \{1,2\}, \{2,3\} \notin E(G),~\d_G(1)=n-2,\\
 & \hspace*{3.5cm}\d_G(2)=n-3,~ \d_G(3)<n-2\}.
\end{split}
\end{equation}

\Cref{eq:simple b23} clearly implies the result.

\item Let $G \in \C_2$ and $\B_{2,3}\neq \emptyset$. Since $\{i,j\} \in G$ for each $3\leq i < j \leq n$, $G+\{i,j\}=G \notin \B_{1,2}$. Further, $\d_{G-\{i,j\}}(2)=0$ for all $3\leq i < j \leq n$, which implies that $G-\{i,j\} \in \B_{2,3}$ for some $3\leq i < j \leq n$ only when $n=3$, which contradicts the fact that $\B_{2,3}\neq \emptyset$.
\end{enumerate}
%\vspace{-0.7cm}
\end{proof}

We now move to step $k$, where $2<k<n$. Inductively, let 
\begin{equation}\label{eq:bk-1k}
\begin{split}
\B_{k-1,k}= & \big{\{}G \in M_{n-2}(K_n) : \{i,i+1\}\notin E(G) \text{ for any } i \in [k-1],~\d_G(1)=n-2,\\
 & \hspace*{2.5cm}\d_G(i)=n-3 \text{ for each } i \in \{2,\dots,k-1\},~ \d_G(k)<n-2\big{\}}. 
 \end{split}
\end{equation}
Compare \Cref{eq:bk-1k} with \Cref{eq:simple b23} when $k=3$.  

\vspace*{0.3cm}

\noindent{\bf \large{Step $\mathbf{k}$:}} Let $\A_{k,k} = \B_{k-1,k}$. As is step $2$, here also we define a sequence of element matchings on $\A_{k,k}$ using vertices $\{k,i\}$ for each $i \in \{k+1,\dots,n\}$. For $i \in \{k+1,\dots,n\}$, define
\begin{equation}\label{eq:matchingstepk}
    \begin{split}
        M_{k,i} & = \{(G,G + \{k,i\} ) : \{k,i\} \notin E(G) \mathrm{~and~} G, G +\{k,i\} \in \A_{k,i-1} \}, \\
        N_{k,i} & = \{G \in \A_{k,i-1} : (G-\{k,i\},G+\{k,i\})  \in M_{k,i}\}, \text{ and}\\
        \A_{k,i} & = \A_{k,i-1} \setminus N_{k,i}.
    \end{split}
\end{equation}

The following result analyses the set of critical cells after this step.

\begin{proposition}\label{prop:stepkcomplete}
For $i \in \{k,\dots,n\}$, let $\A_{k,i}$ be as defined above.
\begin{enumerate}
\item $\A_{k,k+1} =  \B_{k,k+1} \sqcup \C_{k,k+1}$, where
\begin{equation*}\label{eq:Bkk+1}
\begin{split}
\B_{k,k+1} & = \{G \in \A_{k,k} : \{k,k+1\} \notin E(G),~ \d_G(k+1)<n-2, ~\d_G(k)=n-3\}, \text{ and}\\
\C_{k,k+1} & =\{G \in \A_{k,k} : \{k,k+1\} \notin E(G),~ \d_G(k+1)=n-2\}.
\end{split}
\end{equation*}

\item $\A_{k,n} =  \B_{k,k+1} \sqcup \C_k$, where
\begin{equation*}
\C_k=\{G \in \A_{k,k} : \{k,i\} \notin E(G) \mathrm{~and~} \d_G(i)=n-2 \mathrm{~for~each~} i \in \{k+1,\dots,n\}\}.
\end{equation*}

\item $|\C_k|=1$ and if $G \in \C_k$ then $|E(G)|= \binom{n-1}{2}$.

\item $\B_{k,k+1} = \emptyset$ if and only if $n=k+1$.

\item If $G \in \C_k$ and $\B_{k,k+1}\neq \emptyset$, then $G-\{i,j\},G+\{i,j\} \notin \B_{k,k+1}$ for any $k+1\leq i < j \leq n$.
\end{enumerate}
\end{proposition}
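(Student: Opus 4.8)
The plan is to prove \Cref{prop:stepkcomplete} by repeating, \emph{mutatis mutandis}, the step-$2$ analysis of \Cref{prop:step2complete}, replacing the indices $2$ and $3$ there by $k$ and $k+1$ throughout. Since $\A_{k,k}=\B_{k-1,k}$ and the sets $\A_{k,i}$ arise from the element matchings $M_{k,i}$ of \Cref{eq:matchingstepk}, \Cref{proposition:sequence of element matching} already tells us that $\bigsqcup_{k+1\le i\le n}M_{k,i}$ is an acyclic matching on $\A_{k,k}$ whose critical cells are exactly $\A_{k,n}$; the real content is to identify these critical cells and to record the structural facts $(3)$--$(5)$ about $\C_k$.

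For part $(1)$ I would check disjointness of $\B_{k,k+1}$ and $\C_{k,k+1}$ (they are separated by $\d_G(k+1)<n-2$ versus $\d_G(k+1)=n-2$) and then both inclusions. For $G$ in either set, adding $\{k,k+1\}$ either pushes $\d_G(k)$ from $n-3$ to $n-2$, violating the condition $\d_G(k)<n-2$ inherited from $\B_{k-1,k}$, or pushes $\d_G(k+1)$ to $n-1$, violating the $(n-2)$-matching condition; hence $G+\{k,k+1\}\notin\A_{k,k}$ and $G\in\A_{k,k+1}$. Conversely, a surviving $G$ cannot have $\{k,k+1\}\in E(G)$ (else $(G-\{k,k+1\},G)\in M_{k,k+1}$) nor satisfy $\d_G(k)<n-3$ together with $\d_G(k+1)<n-2$ (else $(G,G+\{k,k+1\})\in M_{k,k+1}$), which leaves precisely $\B_{k,k+1}\sqcup\C_{k,k+1}$. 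Part $(2)$ is the same two-sided argument one stage further out: $\C_k\subseteq\A_{k,n}$ because each $i\in\{k+1,\dots,n\}$ already has $\d_G(i)=n-2$, so no $\{k,i\}$ can be added, and $\B_{k,k+1}\subseteq\A_{k,n}$ because $\d_G(k)=n-3$ forces both $G+\{k,i\}$ (degree of $k$ becomes $n-2$, leaving $\A_{k,k}$) and $G-\{k,i\}$ (degree of $k$ becomes $n-4\neq n-3$, leaving $\B_{k,k+1}$) out of the relevant intermediate complex. For the reverse inclusion one writes $\A_{k,n}\subseteq\A_{k,k+1}=\B_{k,k+1}\sqcup\C_{k,k+1}$ via part $(1)$ and, for $G\in\C_{k,k+1}\setminus\C_k$, takes the least $i_0$ with $\{k,i_0\}\in E(G)$ or $\d_G(i_0)<n-2$ and shows $G\in N_{k,i_0}$, a contradiction.

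Parts $(3)$--$(5)$ rest on an explicit description of $\C_k$. Unwinding the constraints of $\C_k$ and $\B_{k-1,k}$, the degree equalities leave no freedom once the edges $\{1,2\},\dots,\{k-1,k\}$ and $\{k,k+1\},\dots,\{k,n\}$ are declared absent, so every vertex neighbourhood is forced and $\C_k$ consists of the single graph whose complement is this spanning tree; the $n-1$ absent edges give $|E(G)|=\binom{n}{2}-(n-1)=\binom{n-1}{2}$, proving $(3)$. For $(4)$ I would expand $\B_{k,k+1}$ in the shape of \Cref{eq:bk-1k} with $k$ replaced by $k+1$: when $n=k+1$ the forced neighbourhoods of $1,\dots,k$ make $\d_G(k+1)=n-2$, contradicting $\d_G(k+1)<n-2$ and forcing emptiness, whereas for $n>k+1$ an explicit witness exists (for instance the graph making $\{k+1,\dots,n\}$ an independent set and keeping all remaining non-path edges). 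Finally, for $(5)$, in the unique $G\in\C_k$ every edge $\{i,j\}$ with $k+1\le i<j\le n$ is present, so $G+\{i,j\}=G\notin\B_{k,k+1}$ since $\C_k$ and $\B_{k,k+1}$ are disjoint; and deleting such an edge does not change $\d_G(k)=k-2$, so $G-\{i,j\}$ could lie in $\B_{k,k+1}$ only if $k-2=n-3$, i.e. $n=k+1$, which part $(4)$ excludes under the hypothesis $\B_{k,k+1}\neq\emptyset$.

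The step I expect to be most delicate is the reverse inclusion in part $(2)$: to conclude that a cell $G\in\C_{k,k+1}\setminus\C_k$ is genuinely matched, one must exhibit the correct partner across the minimal index $i_0$ and verify that this partner lies in $\A_{k,i_0-1}$, which requires the bookkeeping that the complexes $\A_{k,i}$ are nested and that all cells with $\{k,j\}\notin E(G)$ and $\d_G(j)=n-2$ for $j<i_0$ have indeed survived to stage $i_0-1$. Everything else reduces to degree counts, so the genuine care lies in tracking these intermediate complexes correctly.
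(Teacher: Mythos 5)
Your proposal is correct and follows essentially the same route as the paper, which handles parts $(1)$, $(2)$, $(4)$ and $(5)$ by repeating the step-$2$ argument with the indices shifted, and proves part $(3)$ exactly as you do, by observing that the constraints force $E(G)=E(K_n)\setminus F$ for the spanning tree $F$ of $n-1$ forbidden edges, whence $|E(G)|=\binom{n}{2}-(n-1)=\binom{n-1}{2}$.
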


\begin{proof} Proof of parts $(1),(2), (4)$ and $(5)$ is similar as in the proof of \Cref{prop:step2complete}. To prove part $(3)$, let $G \in \C_k$ and $F$ denote the set of edges $\{\{i,i+1\}: i \in [k-1]\} \sqcup \{\{k,j\}: j \in \{k+1,\dots,n\}\}$. By definition of $\C_k$, $E(G) \cap F = \emptyset$. Further, $\deg_G(i)=n-2$ for each $i \in \{1,k+1,\dots,n\}$ and $\deg_G(j)=n-3$ for each $j\in \{2,\dots,k-1\}$ imply that $E(G)=E(K_n)\setminus F$. Therefore, $\C_k$ contains exactly one graph and $|E(G)|= |E(K_n)|- |F|=\binom{n}{2}-(n-1)=\binom{n-1}{2}$.
\end{proof}

\begin{proof}[Proof of \Cref{theorem:completegraphmain}]
Using \Cref{proposition:sequence of element matching}, we get that $ {\displaystyle \bigsqcup\limits_{\substack{1\leq i \leq n-1, \\ i+1 \leq j \leq n}}M_{i,j}}$ is an acyclic matching on $\A_{1,1}=M_{n-2}(K_n)$ with $\C=\bigsqcup\limits_{i \in [n-1]} \C_i$ as the set of the critical cells. From \Cref{prop:C1 is critical}$(1)$, \Cref{prop:step2complete}$(3)$ and \Cref{prop:stepkcomplete}$(3)$, we have $|\C|=n-1$ and each graph in $\C$ has exactly $\binom{n-1}{2}$ edges. Therefore, \Cref{acyc4} implies that $M_{n-2}(K_n)$ is homotopy equivalent to a wedge of $(n-1)$ spheres of dimension $\binom{n-1}{2}-1$. 
\end{proof}

For $2 \leq d \leq n-1$, Jonsson \cite[Theorem $12.8$]{Jon08} obtained a connectivity bound for $M_d(K_n)$ and stated that ``we do not believe that the derived bound is actually equal to the connectivity degree". Here, we compare the findings of \Cref{theorem:completegraphmain} with Jonsson's result and show that the bound given by him is actually sharp for $d=n-2$. 

\begin{theorem}{\cite[Theorem $12.8$]{Jon08}}\label{thm:jonssonconnectivity}
Let $d\geq 2$ and $n\geq d+1$. Write $n = (d+4)k+r,$ where $d+1\leq r\leq 2d+4$. Then $M_d(K_n)$ is $(\big{\lceil}{\nu_n^d}\big{\rceil}-1)$-connected, where 
\begin{equation*}
\begin{split}
\nu_n^d & = \frac{(d^2+3d-1)n}{2(d+4)}-\frac{\epsilon_d(r)}{2}-1, \mathrm{~and}\\
\epsilon_d(r) & = \frac{3r}{d+4}- \begin{cases}
1 & \mathrm{~if~} r=d+1; \\
2 & \mathrm{~if~} d+2\leq r\leq d+3; \\
3 & \mathrm{~if~} d+4\leq r\leq 2d+3; \\
4 & \mathrm{~if~} r=2d+4.
\end{cases}
\end{split}
\end{equation*}
\end{theorem}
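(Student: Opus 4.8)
The plan is to deduce the connectivity bound from discrete Morse theory by constructing, on the face poset of $M_d(K_n)$, an acyclic matching whose every critical face has dimension at least $\lceil\nu_n^d\rceil$. Since a face of $M_d(K_n)$ is a graph $G$ on $[n]$ with $\deg_G(v)\leq d$ for all $v$ and $\dim(G)=|E(G)|-1$, this amounts to forcing every critical graph to satisfy $|E(G)|\geq \lceil\nu_n^d\rceil+1$. If such a matching exists then, by \Cref{acyc3}, $M_d(K_n)$ is homotopy equivalent to a CW complex with a single $0$-cell and all other cells in dimension $\geq\lceil\nu_n^d\rceil$; such a complex has trivial homotopy groups in dimensions $0,1,\dots,\lceil\nu_n^d\rceil-1$, which is exactly the assertion that $M_d(K_n)$ is $(\lceil\nu_n^d\rceil-1)$-connected.

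To build the matching I would fix a carefully chosen linear order on $E(K_n)$ and apply the element-matching machinery: processing the edges in order, at each stage attempt to pair the current face $G$ with $G\pm e$ through the vertex $e$, so that \Cref{proposition:sequence of element matching} guarantees acyclicity of the composite matching. The critical faces are precisely the graphs that survive every step, that is, graphs $G$ such that for each edge $e$ taken in order, either $G+e$ already violates the degree bound (some endpoint of $e$ is saturated to degree $d$) or $G-e$ has been consumed by an earlier element matching. The first task is then to translate this survival condition into an explicit structural description, the expected outcome being that criticality forces a large, regularly distributed set of degree-$d$ vertices, so that critical graphs are necessarily edge-dense.

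The core of the proof is the resulting extremal problem: minimise $|E(G)|$ over all critical graphs $G$. This is where the decomposition $n=(d+4)k+r$ should enter, since the sparsest critical graph ought to split into $k$ congruent blocks on $d+4$ vertices, each carrying a fixed number of edges near $\tfrac{d^2+3d-1}{2}$, together with one boundary block on the remaining $r$ vertices whose edge count is smaller and is recorded by the correction $\epsilon_d(r)$. Dividing edges per block by vertices per block recovers the leading coefficient $\tfrac{d^2+3d-1}{2(d+4)}$, the ceiling absorbs the parity of the half-integer edge counts, and the four ranges of $r$ in $\epsilon_d(r)$ correspond to the possible sizes and saturations of the boundary block. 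Assembling these contributions and simplifying should produce exactly $|E(G)|-1\geq\lceil\nu_n^d\rceil$.

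I expect the difficulty to be concentrated in two places. First, proving that the survival condition really pins down an edge-dense family — that no sparse graph slips through the matching to become a low-dimensional critical cell — is delicate and must be done uniformly in $d$ and $n$. Second, and more seriously, the extremal optimisation leading to the piecewise $\epsilon_d(r)$ is the genuine content: one must exhibit the optimal block on $d+4$ vertices and prove its optimality against all competing critical configurations, which is precisely where Jonsson's detailed case analysis resides. A conceptually cleaner alternative would replace the global matching by an induction on $n$ using the deletion–link decomposition at a single vertex together with a Mayer–Vietoris/van Kampen connectivity estimate; however, reproducing the period-$(d+4)$ behaviour through such an induction appears to demand the same extremal input at the base, so the combinatorial optimisation remains the essential obstacle either way.
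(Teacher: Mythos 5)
You should first note that the paper contains no proof of this statement: \Cref{thm:jonssonconnectivity} is quoted verbatim from Jonsson \cite[Theorem 12.8]{Jon08} purely as background, so that the author can observe, by comparison with \Cref{theorem:completegraphmain}, that Jonsson's bound is sharp when $d=n-2$. There is therefore no argument in the paper to compare yours against; your sketch has to stand on its own, and it does not.

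The two places where you yourself locate the difficulty are not technical details to be deferred --- they are the entire content of the theorem, and neither is carried out. First, you never exhibit the linear order on $E(K_n)$, and consequently never derive any structural description of the critical cells; \Cref{proposition:sequence of element matching} gives acyclicity for free, but the lower bound on the dimension of critical cells, which is the only thing connectivity needs, depends entirely on that unspecified choice. Second, the picture of ``$k$ congruent blocks on $d+4$ vertices'' is reverse-engineered from the shape of the formula for $\nu_n^d$, not derived from any property of a matching: you neither construct a sparsest critical graph nor prove optimality against all critical graphs, and the piecewise correction $\epsilon_d(r)$ is never obtained. Your own hedging (``should enter'', ``ought to split'', ``I expect'') concedes this. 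Two further warnings. Since the homology of $M_d(K_n)$ can have torsion (\cite[Table 12.2]{Jon08}, as the paper recalls), any acyclic matching realizing this bound necessarily has critical cells scattered through many dimensions, so uniform control of the \emph{minimum} critical dimension in $d$ and $n$ is exactly the hard part you are assuming away. And your alternative route via deletion/link induction does not stay inside the class $M_d(K_n)$: the link of an edge $\{i,j\}$ imposes the degree bound $d-1$ at $i$ and $j$ only, so the induction must be run over general bounded degree complexes $\bd^{\vec{\lambda}}(G)$, which is precisely the generality in which Jonsson sets up the machinery that proves his Theorem 12.8. As it stands, your proposal is a plausible research plan, not a proof.
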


It is easy to see that $\nu_n^{n-2}=\binom{n-1}{2}-1$. Therefore, \Cref{theorem:completegraphmain} implies the following.
\begin{remark} The connectivity bound for $M_{n-2}(K_n)$ given in \Cref{thm:jonssonconnectivity} is sharp.
\end{remark}

\section{Higher matching complexes of complete bipartite graphs}
 
It is easy to see that, for $m > r \geq  n \geq 1$, the complex $M_r (K_{m,n} )$ is the join\footnote{The \emph{join} of two simplicial complexes $K_1$ and $K_2$ is a
simplicial complex whose simplices are disjoint union of simplices of $K_1$ and of $K_2$.} of $n$ copies of the $(r - 1)$-skeleton of an $(m - 1)$-dimensional simplex. Therefore, from \cite[Lemma 2.5]{BW95}, $M_r (K_{m,n} )$ is homotopy equivalent to a wedge of spheres whenever $m > r \geq  n$. In this section, we determine the homotopy type of $M_{n-1}(K_{n,n})$.
 We first fix some notations. 
\begin{equation}\label{notation:bip}
\begin{split}
V(K_{m,n}) &= \{a_i: i \in [m]\}\sqcup \{b_j: j \in [n]\}, \text{ and}\\
E(K_{m,n}) &= \{\{a_i,b_j\}: i \in [m],~ j \in [n]\}.
\end{split}
\end{equation}

\begin{proof}[Proof of \Cref{theorem:completebipgraphmain}]
We prove this by defining a sequence of element matchings on $M_{n-1}(K_{n,n})$ in $(n-1)$-steps.

\vspace*{0.3cm}

\noindent{\bf \large{Step $\mathbf{1}$:}}
Let $\H_{1,0}=M_{n-1}(K_{n,n})$. For $1\leq i \leq n $, define
\begin{equation*}
    \begin{split}
        M_{1,i} & = \{(G,G + \{a_1,b_i\} ) : \{a_1,b_i\} \notin E(G) \mathrm{~and~} G, G + \{a_1,b_i\} \in \H_{1,i-1} \}, \\
        N_{1,i} & = \{G \in \H_{1,i-1} : (G-\{a_1,b_i\},G+\{a_1,b_i\})  \in M_{1,i}\}, \text{ and}\\
        \H_{1,i} & = \H_{1,i-1} \setminus N_{1,i}.
    \end{split}
\end{equation*}

\begin{claim}\label{claim:completebipnn}
$\H_{1,n}=\{G \in \H_{1,0} : \{a_1,b_1\}\notin E(G),~ \deg_G(b_1)<n-1 \mathrm{~and~} \deg_G(a_1)=n-1 \}.$
\end{claim}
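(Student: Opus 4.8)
The plan is to exploit that the edges $\{a_1,b_1\},\dots,\{a_1,b_n\}$ used in Step $1$ are exactly the edges of the star at $a_1$, and to mirror the two-part analysis of Step $1$ in the complete-graph case. I would fix the subgraph $G'$ of $G$ consisting of all edges not incident to $a_1$ (i.e.\ the edges joining $\{a_2,\dots,a_n\}$ to $\{b_1,\dots,b_n\}$) and encode the star by $S=\{j\in[n]:\{a_1,b_j\}\in E(G)\}$, so that $\deg_G(a_1)=|S|$, while $\deg_G(b_j)=\deg_{G'}(b_j)$ if $j\notin S$ and $\deg_{G'}(b_j)+1$ if $j\in S$. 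Since no $M_{1,i}$ alters an edge outside the star, $G'$ stays constant along the whole sequence; in particular whether $\deg_G(b_1)<n-1$ or $\deg_G(b_1)=n-1$ is decided by $G'$ alone.

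First I would prove the analogue of the complete-graph Step $1$ (\Cref{claim:step1complete}): after $M_{1,1}$ a cell is critical iff $\{a_1,b_1\}\notin E(G)$ and either $\deg_G(a_1)=n-1$ or $\deg_G(b_1)=n-1$, because $G+\{a_1,b_1\}$ leaves $\H_{1,0}$ precisely when adding that edge would push $\deg(a_1)$ or $\deg(b_1)$ past $n-1$. The inclusion $\supseteq$ of the claim is then immediate: if $\deg_G(a_1)=n-1$ and $\{a_1,b_1\}\notin E(G)$, then $a_1$ is joined to all of $b_2,\dots,b_n$, so for $i\ge 2$ the edge $\{a_1,b_i\}$ lies in $G$; deleting it gives a graph with $\deg(a_1)=n-2$ and $\{a_1,b_1\}\notin E$, which—because $\deg_G(b_1)<n-1$—was matched upward by $M_{1,1}$ and hence already removed before stage $i$. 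Thus such a $G$ is matched neither up nor down by any $M_{1,i}$ and is critical.

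For the reverse inclusion I would split the cells of $\H_{1,1}$ according to the value of $\deg_G(b_1)$. If $\deg_G(b_1)<n-1$, membership in $\H_{1,1}$ forces $\deg_G(a_1)=n-1$, which is exactly the asserted set, and these survive by the previous paragraph. The substance lies in the complementary case $\deg_G(b_1)=n-1$, where I must show every such $G$ is eventually matched by some $M_{1,i}$ with $i\ge 2$. Fixing $G'$ with $\deg_{G'}(b_1)=n-1$, the admissible star sets are exactly the $S\subseteq T:=\{j:\deg_{G'}(b_j)\le n-2\}$ with $|S|\le n-1$, and since $1\notin T$ the matchings $M_{1,2},\dots,M_{1,n}$ restrict to the sequential element matching on the coordinates of $T$.

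The main obstacle is to show this residual matching is perfect, so that no cell with $\deg_G(b_1)=n-1$ reaches $\H_{1,n}$. Two observations settle it. A counting argument gives $T\neq\emptyset$: were every $\deg_{G'}(b_j)=n-1$, then $G'$ would contain $n(n-1)$ edges, which is impossible since its left side $\{a_2,\dots,a_n\}$ has only $n-1$ vertices of degree at most $n-1$, permitting at most $(n-1)^2$ edges. Moreover $T\subseteq\{2,\dots,n\}$ has at most $n-1$ elements, so the constraint $|S|\le n-1$ is never binding and the admissible sets form the full Boolean lattice $2^{T}$; on such a lattice the element matching along any single coordinate $i\in T$ is already a perfect matching. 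Hence the first coordinate of $T$ pairs off every configuration, and I would verify that at that stage none of them has been removed (the earlier coordinates outside $T$ match nothing in this fibre). This leaves exactly the claimed set. The points I would be most careful about are that the case split is preserved under all star toggles (it depends only on $G'$), and the precise phrasing that a partner graph ``was matched earlier by $M_{1,1}$'' is what guarantees its absence from the later $\H_{1,i-1}$.
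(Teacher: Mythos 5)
Your proof is correct and takes essentially the same route as the paper's: the same split of $\H_{1,1}$ into the cells with $\deg_G(b_1)=n-1$ versus those with $\deg_G(a_1)=n-1$ and $\deg_G(b_1)<n-1$, the same observation that the latter survive because each $G-\{a_1,b_i\}$ was already matched upward by $M_{1,1}$, and the same counting argument (total degree on the $b$-side versus the $n-1$ vertices $a_2,\dots,a_n$) guaranteeing a first index at which every cell of the former type is paired off. Your fibre-over-$G'$ and Boolean-lattice packaging is a reorganization of the paper's choice of $t=\min\{i\ge 2:\{a_1,b_i\}\in E(H)\text{ or }\deg_H(b_i)<n-1\}$, which coincides with $\min T$ in your notation.
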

\begin{proof}[Proof of \Cref{claim:completebipnn}]
Observe that, if  $G \in \H_{1,1}$ then $\{a_1,b_1\}\notin E(G)$ and either $\deg_G(a_1)=n-1$ or $\deg_G(b_1)=n-1$. Define, 
\begin{equation}
    \begin{split}
        \I_{1,1} & =\{G \in \H_{1,1} : \deg_G(b_1) = n-1\}\\
                 & = \{G \in \H_{1,0} : \{a_1,b_1\} \notin E(G),~ \deg_G(b_1)=n-1\} \mathrm{~and} \\
        \J_{1,1} & =\{G \in \H_{1,1} : \deg_G(b_1) < n-1, ~\deg_G(a_1)=n-1\}\\
                 & = \{G \in \H_{1,0} : \{a_1,b_1\} \notin E(G), ~\deg_G(b_1) < n-1, ~\deg_G(a_1)=n-1\}.
    \end{split}
\end{equation}

Clearly $\H_{1,1}=\I_{1,1}\sqcup \J_{1,1}.$ Further if $G \in \J_{1,1}$, then for each $i \in \{2,\dots,n\}$, $\{a_1,b_i\}\in E(G)$ and $G-\{a_1,b_i\} \notin \H_{1,1}$. Therefore $\J_{1,1} \subseteq \H_{1,n}$. We now show that $\H_{1,n} \subseteq \J_{1,1}$. Let $H \in \H_{1,n} \setminus \J_{1,1}$, {\itshape i.e.}, $\deg_{H}(b_1)=n-1$. Let $t=\text{min}\{i \in \{2,\dots,n\}: \text{either } \{a_1,b_i\} \in E(H) \text{ or } \deg_H(b_i)<n-1\}$. Observe that $t$ exists because, if $\{a_1,b_i\}\notin E(H)$ and $\deg_{H}(b_i)=n-1$ for each $i \in [n]$, then there exists $j\in \{2,\dots,n\}$ such that $\deg_H(a_j)>n-1$ which contradicts the fact that $H \in M_{n-1}(K_{n,n})$. Now, it is easy to see that $H \in N_{1,t}$. Therefore $\H_{1,n} = \J_{1,1}$.
\end{proof}

 We now move to step $k$, where $1<k<n$. At step $k-1$ we defined a sequence of elements matchings using vertices $\{a_{k-1},b_1\} < \dots < \{a_{k-1},b_n\}$. Inductively, let the set of critical cells after step $k-1$ be
\begin{equation}\label{eq:Hk-1n}
\H_{k-1,n}=\{G \in \H_{1,0} : \{a_i,b_1\} \notin E(G), ~\deg_G(a_i)=n-1,~ \forall~i \in [k-1],~\deg_G(b_1)< n-k+1 \}.
\end{equation}
Compare \Cref{eq:bk-1k} with \Cref{claim:completebipnn} for $k=2$. 

\vspace*{0.3cm}

\noindent{\bf \large{Step $\mathbf{k}$:}} Let $\H_{k,0} = \H_{k-1,n}$. Define a sequence of elements matchings on $\H_{k,0}$ using vertices $\{a_{k},b_1\} < \dots < \{a_{k},b_n\}$. For $1\leq j \leq n$, define
\begin{equation*}
    \begin{split}
        M_{k,j} & = \{(G,G + \{a_k,b_j\} ) : \{a_k,b_j\} \notin E(G) \mathrm{~and~} G, G +\{a_k,b_j\} \in \H_{k,j-1} \}, \\
        N_{k,j} & = \{G \in \H_{k,j-1} : (G-\{a_k,b_j\},G+\{a_k,b_j\}) \in M_{k,j}\}, \text{ and}\\
        \H_{k,j} & = \H_{k,j-1} \setminus N_{k,j}.
    \end{split}
\end{equation*}

Since $k <n$, using similar arguments as in the proof of \Cref{claim:completebipnn}, we get that 
$$\H_{k,n}=\{G \in \H_{k,0} : \{a_k,b_1\}\notin E(G),~ \deg_G(b_1)<n-k \mathrm{~and~} \deg_G(a_k)=n-1 \}.$$

After step $n-1$, we have that ${\displaystyle \bigsqcup\limits_{\substack{1\leq i \leq n-1, \\ 1 \leq j \leq n}}M_{i,j}}$ is an acyclic matching on $M_{n-1}(K_{n,n})$ and the set of critical cells is
\begin{equation*}
    \begin{split}
    \H_{n-1,n} & =\{G \in \H_{n-2,n} : \{a_{n-1},b_1\}\notin E(G),~ \deg_G(b_1)<1 \mathrm{~and~} \deg_G(a_{n-1})=n-1 \}\\
    &=\{G \in M_{n-1}(K_{n,n}) : \{a_i,b_1\} \notin E(G), ~\deg_G(a_i)=n-1,~ \forall~i \in [n-1],~\deg_G(b_1)=0\}.
    \end{split}
\end{equation*}

It is easy to see that the set $\H_{n-1,n}$ contains exactly one element which is isomorphic to the complete bipartite graph $K_{n-1,n-1}$ and two isolated vertices namely $a_n$ and $b_1$. \Cref{acyc4} thus implies that 
\begin{equation*} M_{n-1}(K_{n,n}) \simeq \S^{(n-1)^2-1}.
\end{equation*}

This completes the proof of \Cref{theorem:completebipgraphmain}.
\end{proof}

\section{Concluding remarks}
In this section, we list a few interesting open problems.

\subsection{Complexes of graphs with bounded domination number}
For a graph $G$, a set $S\subseteq V(G)$ is called a {\itshape dominating set} of $G$, if for each $v\in V(G)\setminus S$ there exists $s\in S$ such that $\{s,v\}\in E(G)$. The {\itshape domination number} of $G$ is defined to be the cardinality of the minimum dominating set, {\itshape i.e.},
$$\mathrm{dom(G)} = \mathrm{min}\{i : \mathrm{~there~exists~a~dominating~set~of~}G \mathrm{~of~cardinality~} i\}.$$

In \cite{JT19}, Gonz{\'a}lez and Hoekstra-Mendoza studied the complexes of graphs on $n$ vertices with domination number at least $\gamma$, denoted as $D_{n,\gamma}.$ When we fix $n$ and vary $\gamma$, we get the following filtration
\begin{equation}\label{eq:filtration}
\emptyset = D_{n,n} \subset D_{n,n-1} \subset D_{n,n-2} \subset \dots \subset D_{n,2} \subset D_{n,1}= \Delta^{\binom{n}{2}-1}
\end{equation} 

It is easy to observe that $D_{n,n-1}$ is disjoint union $\binom{n}{2}$ vertices. Therefore, the ``first" non-trivial cases are $D_{n,2}$ and $D_{n,n-2}$. Gonz{\'a}lez and Hoekstra-Mendoza \cite{JT19} showed that the complex $D_{n,n-2}$ is homotopy equivalent to a wedge of $2$-dimensional spheres. 

Observe that, $G \in M_{n-2}(K_n)$ if and only if dom$(G) \geq 2$, {\itshape i.e.,} $G \in D_{n,2}$. Therefore, \Cref{theorem:completegraphmain} gives a closed form formula for the homotopy type of $D_{n,2}$ and settles one more spot in \Cref{eq:filtration}. This raises the following question.

\begin{question}
For $n > \gamma \geq 1 $, is $D_{n,\gamma}$ homotopy equivalent to a wedge of spheres?
\end{question}

Here, empty wedge represents a contractible space. Looking at the homotopy type of $D_{n,n-2}$ and $D_{n,2}$, one might be tempted to guess that $D_{n,\gamma}$ is homotopy equivalent to a wedge of equi-dimensional spheres. But that is not the case in general, for instance, using SageMath \cite{sage} one can see that 
\begin{equation*}
    \tilde{H}_i(D_{6,3};\mathbb{Z}) \cong \begin{cases}
    \mathbb{Z}^{115}, & \text{ if } i=4;\\
    \mathbb{Z}^{24}, & \text{ if } i=5;\\
    0, & \text{ otherwise}.
    \end{cases}
\end{equation*}

Which implies that $D_{6,3}$ is not homotopic to a wedge of equi-dimensional spheres. Here $\tilde{H}_i(\K;\mathbb{Z})$ denotes the reduced $i^{\text{th}}$ homology group of simplcial complex $\K$ with integer coefficients.

\subsection{Homotopical depth}
A pure simplicial complex $L$ is called {\itshape homotopically Cohen-Macaulay (CM)} if link of any simplex $\sigma$ in $L$, denoted as lk$_{L}(\sigma)$, is $($dim$($lk$_{L}(\sigma)) -1)$-connected. The {\itshape homotopical depth} of a simplicial complex $\K$ (not necessarily pure) is the largest $k$ such that the $k$-skeleton of $\K$, denoted as $\K^{(k)}$, is pure and homotopically CM. 

It is easy to see that the homotopical depth of a pure simplicial complex $\K$ is at most the shifted connectivity degree of $\K$. The homotopical depth of $M_1(K_n)$ is known to be equal to the shifted connectivity degree of $M_1(K_n)$ which is $\lceil{\frac{n-4}{3}}\rceil$, see \cite[Corollary 11.13]{Jon08}. In \cite[Proposition 12.11]{Jon08}, Jonsson showed that the homotopical depth of $M_2(K_n)$ is at least $\lceil{\frac{3n-7}{4}}\rceil$. In this direction, we strongly believe that the following is true.

\begin{conjecture}\label{conjecture}
The homotopical depth of $M_{n-2}(K_n)$ and $M_{n-1}(K_{n,n})$ is equal to the respective shifted connectivity degree.
\end{conjecture}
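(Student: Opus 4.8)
\medskip
\noindent\textbf{A proposed approach to \Cref{conjecture}.}
The plan is to prove the two inequalities separately; the upper bound is formal, and the lower bound carries all the content. Write $s$ for the shifted connectivity degree. By \Cref{theorem:completegraphmain} the complex $M_{n-2}(K_n)$ is a wedge of $\big(\binom{n-1}{2}-1\big)$-spheres and by \Cref{theorem:completebipgraphmain} the complex $M_{n-1}(K_{n,n})$ is a single $\big((n-1)^2-1\big)$-sphere; since a wedge of $d$-spheres is exactly $(d-1)$-connected, we get $s=\binom{n-1}{2}-1$ and $s=(n-1)^2-1$ respectively. For the upper bound I would argue in complete generality: if $\K^{(k)}$ is pure of dimension $k$ and homotopically CM, then taking $\sigma=\emptyset$ in the definition shows $\K^{(k)}=\link_{\K^{(k)}}(\emptyset)$ is $(k-1)$-connected, while every cell of $\K\setminus\K^{(k)}$ has dimension $\ge k+1$, so $\K^{(k)}\hookrightarrow\K$ is a $k$-equivalence and $\K$ is itself $(k-1)$-connected; hence $k-1\le\conn(\K)$ and $k\le s$. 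This gives $\dep\le s$ for both families at once, and (for $K_{n,n}$) already rules out the numerically possible value $k=s+1$.

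For the lower bound I must exhibit $k=s$, i.e.\ show that $\K^{(s)}$ is pure and homotopically CM. Purity requires every facet of $\K$ to have dimension at least $s$. A facet is a maximal $r$-matching $H$ of $G$, where $G=K_n$, $r=n-2$ in the first case and $G=K_{n,n}$, $r=n-1$ in the second; setting $D=E(G)\setminus H$, maximality forces every edge of $D$ to have an endpoint of $D$-degree one while the matching condition forces $D$ to cover every vertex, so $D$ is a vertex-disjoint union of stars covering $V(G)$. Maximizing $|D|$ then amounts to minimizing the number of star components: for $K_n$ the minimum is one (a single $K_{1,n-1}$, whose complement is the critical cell of \Cref{prop:C1 is critical}), giving $|D|\le n-1$ and minimum facet dimension $\binom{n-1}{2}-1=s$; in the bipartite case the two-sided constraint forces at least two components, so $|D|\le 2n-2$ and the minimum facet dimension is $(n-1)^2>s$. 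Either way every facet has dimension $\ge s$, so $\K^{(s)}$ is pure.

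The crux is homotopical CM-ness of $\K^{(s)}$. For a face $\sigma=H$ the link $\link_{\K}(H)$ is exactly the bounded degree complex $\bd^{\vec\lambda}(G-H)$ with budgets $\lambda_i=r-\d_H(i)$, and since $\link_{\K^{(s)}}(\sigma)=\big(\link_{\K}(\sigma)\big)^{(s-|\sigma|)}$ and passing to a skeleton preserves connectivity below its top dimension, the complex $\link_{\K^{(s)}}(\sigma)$ is $(s-|\sigma|-1)$-connected if and only if $\link_{\K}(\sigma)$ is. Thus the entire CM condition collapses to one uniform statement: for every face $H$, the bounded degree complex $\bd^{\vec\lambda}(G-H)$ is connected up to its predicted top dimension $s-|\sigma|-1$. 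I would attack this either by substituting $\vec\lambda$ into Jonsson's connectivity estimate for bounded degree complexes (the general form underlying \Cref{thm:jonssonconnectivity}), or, more promisingly, by running a sequence of element matchings on each link exactly as in \Cref{proposition:sequence of element matching}, imitating the bookkeeping of Steps $1$--$k$ to show every link is a wedge of spheres in its top dimension. The tidiest packaging would be to prove that $\K^{(s)}$ is shellable or vertex-decomposable, which implies homotopical CM-ness outright.

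The main obstacle is precisely the sharpness demanded at this last step: homotopical CM-ness needs each link to be connected \emph{exactly} up to $\dim(\link)-1$, and this for \emph{every} face $\sigma$ simultaneously, whereas Jonsson's general bounds are not known to be tight for arbitrary budget vectors. I therefore expect the real work to be either constructing, uniformly over all degree profiles $\vec\lambda$ that arise, an acyclic matching on a general link whose critical cells sit in the single top dimension, or writing down an explicit shelling of the $s$-skeleton of a genuinely non-pure matching complex; taming the large family of profiles $\vec\lambda$ as $H$ ranges over all faces is where the delicate combinatorics will concentrate.
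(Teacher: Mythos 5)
The statement you are addressing is labelled a \emph{conjecture} in the paper, and the paper does not prove it: the only ingredient it establishes is the purity of the relevant skeleta (the final Proposition of Section 5, showing every facet of $M_{n-2}(K_n)$ has at least $\binom{n-1}{2}$ edges and every facet of $M_{n-1}(K_{n,n})$ at least $(n-1)^2$). Your proposal reproduces exactly the parts that are known --- the formal upper bound $\dep \le s$ via taking $\sigma=\emptyset$ and the skeleton inclusion, and the purity of $\K^{(s)}$ --- and then openly defers the entire content of the conjecture, namely that $\link_{\K}(\sigma)$ is $(s-|\sigma|-1)$-connected for \emph{every} face $\sigma$. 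That reduction (links of a skeleton are skeleta of links, and truncation preserves connectivity below the top dimension) is correct and useful bookkeeping, but it is a restatement of the problem, not a solution: neither Jonsson's bounds for general budget vectors $\vec\lambda$ nor an explicit acyclic matching on each link is supplied, and this is precisely where the difficulty lives. So there is a genuine gap --- the same gap the paper has, which is why the statement is posed as \Cref{conjecture} rather than a theorem.

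Two smaller remarks. Your purity argument is a legitimate alternative to the paper's: you pass to the complement $D=E(G)\setminus H$ of a facet, observe that $D$ must cover every vertex and that every edge of $D$ has an endpoint of $D$-degree one, hence $D$ is a vertex-disjoint union of stars, and then bound $|D|$ by minimizing the number of star components (one star for $K_n$, at least two for $K_{n,n}$). The paper instead argues directly on $H$ in the bipartite case by locating a non-adjacent pair of deficient vertices and counting edges. Both are correct; yours has the advantage of explaining \emph{why} the extremal facets look the way they do (complements of stars), which connects nicely to the critical cells $\C_k$ of Section 3. Finally, watch the off-by-one in the bipartite case: the minimum facet \emph{cardinality} is $(n-1)^2$, so the minimum facet \emph{dimension} is $(n-1)^2-1=s$, not ``$(n-1)^2>s$'' as written; the conclusion that every facet has dimension at least $s$ is unaffected.
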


The purity of $M_{n-2}(K_n)^{\big{(}\binom{n-1}{2}-1\big{)}}$ and $M_{n-1}(K_{n,n})^{((n-1)^2-1)}$ comes from the following observation.

\begin{proposition}
If $G \in M_{n-2}(K_n)$ and $H \in M_{n-1}(K_{n,n})$ are facets in respective complexes then $|E(G)|\geq \binom{n-1}{2}$ and $|E(H)|\geq (n-1)^2$.
\end{proposition}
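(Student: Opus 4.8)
The plan is to recast both statements as upper bounds on the number of edges of the \emph{complementary} graphs and then read off the edge counts from a single structural observation. For the complete graph, write $\bar G = E(K_n)\setminus E(G)$, viewed as a graph on $[n]$. Membership $G\in M_{n-2}(K_n)$ means $\deg_G(v)\le n-2$ for every $v$, which is equivalent to $\deg_{\bar G}(v)=(n-1)-\deg_G(v)\ge 1$; so $\bar G$ has minimum degree at least $1$. The facet (maximality) condition says that no non-edge can be added: for every $\{i,j\}\notin E(G)$, the graph $G+\{i,j\}$ leaves the complex, i.e.\ $\deg_G(i)=n-2$ or $\deg_G(j)=n-2$, which in the complement reads $\deg_{\bar G}(i)=1$ or $\deg_{\bar G}(j)=1$. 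Thus every edge of $\bar G$ is incident to a vertex of degree exactly $1$. I would run the identical translation for the bipartite case with $\bar H = E(K_{n,n})\setminus E(H)$: membership forces $\deg_{\bar H}(v)\ge 1$ for every vertex of $A\sqcup B$, and maximality forces every edge of $\bar H$ to have an endpoint of degree $1$.

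Next I would isolate the combinatorial core as a small lemma: \emph{if a graph $F$ has minimum degree at least $1$ and every edge of $F$ is incident to a vertex of degree exactly $1$, then each connected component of $F$ is a star $K_{1,s}$ with $s\ge 1$.} The proof is short: in a component containing a vertex $u$ of degree $\ge 2$, every neighbor of $u$ must be a leaf (an edge joining two vertices of degree $\ge 2$ would violate the hypothesis), and each such leaf is adjacent only to $u$, so the component is exactly the star centered at $u$; a component with all degrees equal to $1$ is, being connected, a single edge $K_2=K_{1,1}$. In particular $F$ is a forest, so $|E(F)|=|V(F)|-c$, where $c\ge 1$ is the number of components.

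Applying the lemma finishes both bounds by counting. For the complete graph, $\bar G$ lives on $n$ vertices, so $|E(\bar G)|=n-c\le n-1$, whence $|E(G)|=\binom{n}{2}-|E(\bar G)|\ge \binom{n}{2}-(n-1)=\binom{n-1}{2}$. For the bipartite graph, $\bar H$ lives on $2n$ vertices, so $|E(\bar H)|=2n-c\le 2n-1$, whence $|E(H)|=n^{2}-|E(\bar H)|\ge n^{2}-(2n-1)=(n-1)^2$, as claimed. I would note that the first bound is attained (the complement $K_{1,n-1}$ corresponds to the facet consisting of $K_{n-1}$ plus an isolated vertex); in the bipartite case one even has $c\ge 2$, since a bipartite star covers only a single vertex of the part containing its center, so meeting all $2n$ vertices needs at least two stars, giving the slightly stronger $|E(H)|\ge (n-1)^2+1$ — but $\ge (n-1)^2$ is all that the purity claim requires.

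The part that needs the most care, and the only genuinely nonroutine step, is the reformulation of the facet condition in the complement: one must correctly read ``$G$ is inclusion-maximal in $M_{n-2}(K_n)$'' as ``every non-edge has a saturated endpoint'' and then as the degree-$1$ condition on $\bar G$. Once that equivalence and the minimum-degree condition are in hand, the star-forest structure and the edge counts are immediate, and the two assertions follow in parallel from the single lemma.
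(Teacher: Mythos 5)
Your proof is correct, but it takes a genuinely different route from the paper's. The paper argues by contradiction with a direct degree count (and only writes out the bipartite case, asserting the $K_n$ case is similar): supposing a facet $H$ has $|E(H)|<(n-1)^2$, it lets $C\subseteq A$ and $D\subseteq B$ be the vertices of degree less than $n-1$, observes that maximality forces every pair in $C\times D$ to be adjacent, and then bounds $|E(H)|\geq (n-1)(n-|C|)+|C||D|+(|C|-1)(n-|D|)=n^2-2n+|C|+|D|$, which forces $|C|+|D|<1$ and contradicts $C\neq\emptyset\neq D$. You instead pass to the complement: your translations are accurate (membership gives minimum degree at least $1$ in $\bar G$, maximality gives that every edge of $\bar G$ meets a vertex of $\bar G$-degree exactly $1$), your star-forest lemma is correctly proved, and the edge counts $|E(\bar G)|\leq n-1$ and $|E(\bar H)|\leq 2n-1$ follow since the star forest spans all vertices. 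What your approach buys: one lemma handles both cases uniformly, it identifies the facets structurally (their complements are exactly the spanning star forests) rather than merely bounding their size, and your side remark that a spanning star forest of a bipartite graph needs at least two components yields the sharper bound $|E(H)|\geq (n-1)^2+1$ --- which is consistent with the paper, since the critical cell of size $(n-1)^2$ found there ($K_{n-1,n-1}$ plus two isolated vertices) is not a facet. The paper's counting argument is shorter and needs no auxiliary lemma, but yours is more transparent and gives strictly more information; either suffices for the purity claim the proposition is meant to support.
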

\begin{proof}
We prove the result for $H \in M_{n-1}(K_{n,n})$. Proof for $G \in M_{n-2}(K_n)$ will follow using similar arguments. Let $H \in M_{n-1}(K_{n,n})$ and $|E(H)|< (n-1)^2$. Let $A=\{a_i: i \in [n]\}$ and $B=\{b_i: b \in [n]\}$ be the partition of vertices of $K_{n,n}$ as in \Cref{notation:bip}. To show that $H$ is not a facet, we need to find two non-adjacent vertices (one from each partition of $V(K_{n,n})$) with degree less than $n-1$. Suppose that there does not exist such pair, {\itshape i.e.}, $H$ is facet. Let $C \subseteq A$ and $D\subseteq B$ such that $\deg_H(x)<n-1$ if and only if $x\in C \sqcup D$. Since $|E(H)|< (n-1)^2$, $C\neq \emptyset \neq D$. Moreover, there does not exist a non-adjacent desired pair implies that $C\sqcup D$ form a complete bipartite subgraph of $H$. Assuming that $|C|=c$, and $|D|=d$, we count the number of edges of $H$.
\begin{equation*}
\begin{split}
|E(H)| & \geq (n-1)(n-c)+cd+(c-1)(n-d)\\
       & = n^2-n-cn+c+cd-cd+cn-n+d\\
       & = n^2-2n+c+d.
\end{split}
\end{equation*}

Our assumption thus implies that $n^2-2n+c+d < (n-1)^2 = n^2-2n+1$. Thus $c+d<1$ and this contradicts the fact that $C\neq \emptyset \neq D$.
\end{proof}

\section{Acknowledgements}
The author would like to thank an anonymous reader for pointing out the stable homotopy equivalence between $M_{n-1}(K_{n,n})$ and $\mathbb{S}^{(n-1)^2-1}$.

 \bibliographystyle{plain}
 
%\bibliography{ref}
\end{document}